\newif\ifdebug
\newcommand{\printname}[1]{\smash{\makebox[0pt]{\hspace{-1.0in}\raisebox{8pt}{\tiny #1}}}}
\newcommand{\Label}[1]{\ifdebug{\label{#1}\printname{#1}}\else{\label{#1}}\fi}
\theoremstyle{plain}
\newtheorem{theo}{Theorem}[section]
\newtheorem{lemm}[theo]{Lemma}
\newtheorem{coro}[theo]{Corollary}
\newtheorem{prop}[theo]{Proposition}
\newtheorem{conj}[theo]{Conjecture}
\theoremstyle{remark}
\newtheorem{rema}[theo]{Remark}
\theoremstyle{definition}
\newtheorem{exam}[theo]{Example}
\def\C{\mathbb C}
\def\Z{\mathbb Z}
\def\R{\mathbb R}
\def\F{\mathcal F}
\def\g{\mathfrak{g}}
\def\id{\mathrm{id}}
\def\pos{\mathrm{pos}}
\def\co{\thinspace \colon}
\def\Hom{\mathrm{Hom}}
\begin{document}
\title[Torus invariant transverse K\"{a}hler foliations]{Torus invariant transverse K\"{a}hler foliations}

\author[H.~Ishida]{Hiroaki Ishida}
\address{Department of Mathematics and Computer Science, Graduate School of Science and Engineering, Kagoshima University}
\email{ishida@sci.kagoshima-u.ac.jp}

\date{\today}
\thanks{The author was supported by JSPS Research Fellowships for Young 
Scientists}

\keywords{Torus action, complex manifold, toric variety, moment-angle manifold, LVM manifold, LVMB manifold, non-K\"{a}hler manifold, transverse K\"{a}hler form, moment map}
\subjclass[2010]{Primary 53D20, Secondary 14M25, 32M05, 57S25}

\begin{abstract}
	In this paper, we show the convexity of the image of a moment map on a transverse symplectic manifold equipped with a torus action under a certain condition. We also study properties of moment maps in the case of transverse K\"{a}hler manifolds. As an application, we give a positive answer to the conjecture posed by Cupit-Foutou and Zaffran. 
\end{abstract}

\maketitle
\section{Introduction}\Label{sec:intro}
	In \cite{Lopez-Verjovsky}, a family of complex manifolds (say LV manifolds here) which includes classical Hopf manifolds and Calabi-Eckmann manifolds are constructed by L\'{o}pez de Medrano and Verjovsky. Most of LV manifolds are non-K\"{a}hler; it is shown that an LV manifold admits a K\"{a}hler form if and only if it is a compact complex torus of complex dimension $1$. In \cite{Loeb-Nicolau}, as a contrast, it is shown by Loeb and Nicolau that each LV manifold carries a transverse K\"{a}hler vector field. In \cite{Meersseman}, Meersseman generalizes the construction of LV manifolds and gives a new family of complex manifolds which are known as LVM manifolds. As well as LV manifolds, an LVM manifold admits a K\"{a}hler form if and only if it is a compact complex torus. He also constructs a foliation $\F$ on each LVM manifold and shows that $\F$ is transverse K\"{a}hler. In \cite{Bosio}, Bosio generalizes LVM manifolds and now they are known as LVMB manifolds. As well as LVM manifolds, if an LVMB manifold is K\"{a}hler then it is a compact complex torus.  The first example of LVMB manifold which is not biholomorphic to any LVM manifold is given by Cupit-Foutou and Zaffran in \cite{Foutou-Zaffran}. In particular, the family of LVMB manifolds properly contains the family of LVM manifolds. In \cite{Battisti}, Battisti gives an explanation of the difference between LVM manifolds and LVMB manifolds in terms of toric geometry.

	In \cite{Foutou-Zaffran}, Cupit-Foutou and Zaffran also construct a foliation $\F$ on each LVMB manifold, as a natural generalization of the case of LVM manifolds. They show that, under an assumption, if the foliation $\F$ on an LVMB manifold $M$ is transverse K\"{a}hler then $M$ is an LVM manifold. From this point of view, they give the following conjecture which is the motivation of this paper: 
	\begin{conj}\Label{conj:FZ}
		An LVMB manifold is an LVM manifold if and only if the foliation $\F$ is transverse K\"{a}hler. 
	\end{conj}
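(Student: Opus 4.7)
The ``only if'' direction is essentially Meersseman's result recalled in the introduction: on every LVM manifold the canonical foliation $\F$ is transverse K\"ahler by construction. So the plan concentrates on the converse: starting from an LVMB manifold $M$ whose foliation $\F$ is transverse K\"ahler, produce the polytopal data that identifies $M$ as an LVM manifold.

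First I would exploit the natural torus action on an LVMB manifold. The compact torus $T$ acts on $M$ preserving $\F$ and, by averaging if necessary, preserving the transverse K\"ahler structure. This places us exactly in the setting of the main convexity theorem announced in the abstract: a torus action on a transverse symplectic manifold admits a moment map $\mu\co M\to \mathfrak{t}^*$ whose image is convex (and, in the compact transverse K\"ahler setting, a convex polytope). I would then verify that the hypotheses of that convexity result are met by the LVMB torus action, the key point being that the action is transverse with respect to $\F$ in the appropriate sense.

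Next I would analyze the polytope $P=\mu(M)$ combinatorially. Using the standard dictionary between torus-equivariant moment data and the combinatorics of fans, together with Battisti's toric-geometric description of LVMB manifolds, I would show that the faces of $P$ recover the combinatorial family $\mathcal E$ defining the LVMB manifold; in other words, every element of $\mathcal E$ arises as a vertex of $P$ and vice versa. This is precisely the ``polytopal'' or ``imbricated'' condition on $\mathcal E$ that distinguishes LVM data from general LVMB data, so once this identification is established, $M$ is an LVM manifold in the sense of Meersseman.

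The main obstacle, I expect, is the last step: going from the mere convexity and polytopality of $\mu(M)$ to the precise combinatorial matching between faces of $P$ and the family $\mathcal E$. Abstract convexity gives a polytope, but one still has to show that no ``extra'' element of $\mathcal E$ survives beyond those forced by the polytope, and that every polytopal vertex is realized. This will presumably require understanding fixed-point and orbit-type strata of the $T$-action on $M$, and relating them to the local structure of the foliation near the singular strata of $\mu$, which is where the transverse K\"ahler hypothesis should do the decisive work.
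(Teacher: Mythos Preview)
Your overall strategy is the paper's, but you have skipped one essential step: the \emph{existence} of the moment map. The convexity theorem (Theorem~\ref{theo:convexity}) takes the moment map as a hypothesis, not a conclusion; for a $G$-invariant transverse symplectic form $\omega$, a moment map exists only when each closed basic $1$-form $-\iota_{X_v}\omega$ is exact, and the obstruction lives in $H^1_{\F}(M)$. Your sentence ``a torus action on a transverse symplectic manifold admits a moment map~$\mu$'' elides this, and the hypothesis you single out to verify (``the action is transverse with respect to $\F$'') is not the one that bites. The paper handles existence by a direct computation (Lemma~\ref{lemm:basic}) showing $H^1_{\F_{\g_J}}(X(\Delta_\Sigma)/H)=0$ for LVMB manifolds with indispensable integer~$0$, and then reduces the general LVMB case to this one via an equivariant holomorphic principal bundle argument. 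Without this vanishing, the convexity machinery never starts.

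The obstacle you \emph{do} flag --- matching the moment polytope with the LVMB combinatorics --- is indeed the other substantive step, handled in Lemma~\ref{lemm:normal}: the critical manifolds of the component functions $h_v$ are exactly the closed strata $Y_\sigma$ corresponding to cones $\sigma\in\Delta$, and the local analysis of minima (Remark~\ref{rema:localminimum}) shows that the image is precisely the inner normal polytope of the complete fan $q(\Delta)$ in $\g/\g_J$. This gives the bijection you anticipate, phrased via normal-fan duality rather than Bosio's family~$\mathcal{E}$. One further wrinkle you omit: the moment map $\Phi$ lands in $\g^*$, but since $h_v$ is constant for $v\in\g_J$ the image lies in an affine translate of $(\g/\g_J)^*$; the paper first passes to a lifted moment map $\widetilde{\Phi}\co M\to(\g/\g_J)^*$ (Theorem~\ref{theo:liftedmoment}) before comparing with $q(\Delta)$.
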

	Our approach to Conjecture \ref{conj:FZ} uses techniques of Hamiltonian torus actions on symplectic manifolds. Especially, (an analogue of) the convexity theorem plays an important role. 
	The convexity theorem shown by Atiyah, Guillemin and Sternberg in \cite{Atiyah} and \cite{Guillemin-Sternberg}  states that if a compact torus $G$ acts on a compact connected symplectic manifold $M$ in Hamiltonian fashion then the image of a corresponding moment map is a convex polytope. In this paper, first we show the following:
	\begin{theo}[see also Theorem \ref{theo:convexity}]\Label{theo:introconvexity}
		Let $M$ be a compact connected manifold equipped with an action of a compact torus $G$. Let $\g'$ be a subspace of the Lie algebra $\g$ of $G$ such that the action of $\g'$ is local free. Let $\F_{\g'}$ be the foliation on $M$ whose leaves are $\g'$-orbits. Let $\omega$ be a $G$-invariant transverse symplectic form on $M$ with respect to $\F_{\g'}$. If there exists a moment map $\Phi \co M \to \g^*$ with respect to $\omega$, then the image of $M$ by $\Phi$ is the convex hull of the image of common critical points of $h_v$ for $v \in \g$, where $h_v \co M \to \R$ is given by $h_v (x)= \langle \Phi (x), v\rangle$. 
	\end{theo}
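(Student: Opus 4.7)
The plan is to adapt the Atiyah--Guillemin--Sternberg convexity proof to the transverse symplectic setting, combining Morse--Bott theory for the component functions $h_v$ with the connectedness-of-fibers argument to establish that $\Phi(M)$ is the convex hull of the images of the common critical points.

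The first step is to identify the critical sets. Since $dh_v = \iota_{X_v}\omega$ and $\omega$ is nondegenerate transverse to $\F_{\g'}$, a point $x$ is critical for $h_v$ if and only if $X_v(x) \in T_x\F_{\g'} = \g'\cdot x$. For $v \in \g'$ this holds everywhere, so $h_v$ is locally constant, hence constant on the connected manifold $M$; after translating $\Phi$ by a suitable element of $\g^*$ we may assume $\Phi(M) \subset (\g')^\perp \subset \g^*$. The common critical set $C = \bigcap_{v \in \g} \mathrm{Crit}(h_v)$ is then the locus where the $G$-orbit through $x$ has minimal dimension $\dim \g'$. Finiteness of isotropy types on the compact manifold $M$ ensures that the proper subspaces of the form $\g' + \mathfrak{k}$, as $\mathfrak{k}$ ranges over the isotropy subalgebras, form a finite family, so for any $v \in \g$ lying outside this finite union of proper subspaces one has $\mathrm{Crit}(h_v) = C$.

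For such a generic $v$ I would next show that $h_v$ is Morse--Bott with even-index critical submanifolds transverse to $\F_{\g'}$. Since $\Phi$ is $G$-invariant (as $G$ is abelian), $h_v$ is constant on $G$-orbits, and in particular on the leaves of $\F_{\g'}$; the Hessian at a point $x \in C$ therefore descends to a well-defined symmetric form on the normal bundle of the leaf through $x$, where $\omega$ restricts to a genuine symplectic form. The closure $T_v = \overline{\exp(\R v)}$ acts linearly on this normal bundle preserving the symplectic structure, and the weight-space decomposition of the induced torus representation yields even-dimensional positive and negative eigenspaces of the Hessian, exactly as in the classical proof; the kernel of the Hessian is tangent to the connected component of $C$ containing $x$.

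Even-index Morse--Bott functions have connected level sets, and by the standard inductive argument this forces $\Phi^{-1}(\xi)$ to be connected for every $\xi$ in the image. The usual local-to-global convexity lemma then shows $\Phi(M)$ is a convex polytope, and since extreme points of $\Phi(M)$ arise as values of maxima and minima of generic linear functionals $h_v$, whose critical points lie in $C$, one obtains $\Phi(M) = \conv(\Phi(C))$. The principal obstacle is the Morse--Bott analysis when $\g'$ is irrationally embedded in $\g$, so that leaves of $\F_{\g'}$ are not closed submanifolds of $M$; in that case one works instead on the normal bundle of the closed $G'$-orbit through $x$, where $G' = \overline{\exp(\g')}$, using that $h_v$ is automatically constant on $G'$-orbits by $G$-invariance of $\Phi$, so the weight-space decomposition and even-index calculation transfer to this enveloping closed submanifold.
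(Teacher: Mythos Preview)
Your strategy matches the paper's: both run Atiyah's scheme (Morse--Bott theory for the $h_v$, connected level sets via even indices, then the inductive connectedness-of-fibers argument yielding convexity, then the generic-$v$ argument for the convex-hull description). Two technical points are handled differently in the paper and deserve comment.

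First, in the Morse--Bott step your torus $T_v=\overline{\exp(\R v)}$ does not fix the critical point $x$: criticality only says $(X_v)_x\in T_x\F_{\g'}$, not $(X_v)_x=0$, so there is no linear $T_v$-action on $T_xM/T_x\F_{\g'}$. The paper avoids this by writing $v=v_x+v'$ with $v_x\in\g_x$ and $v'\in\g'$ (possible exactly because $(X_v)_x\in\g'\cdot x$ and $\g'$ acts locally freely); since $h_{v'}$ is constant, $h_v$ and $h_{v_x}$ have identical Hessians. Now $G_x$ genuinely fixes $x$ and, because $\F_{\g'}$ is $G$-invariant, acts on the \emph{local leaf space} $\widetilde{U_x}/\F_{\g'}$, an honest $2n$-dimensional symplectic manifold on which $\omega$ descends. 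An equivariant Darboux chart there yields $h_v=\mathrm{const}+\pi\sum_i\langle d\alpha_i,v_x\rangle(x_i^2+y_i^2)$ with $\alpha_i$ the $G_x$-weights on $T_xM/T_x\F_{\g'}$, and even index follows. This entirely sidesteps the irrational-$\g'$ issue you flag at the end: no leaf closure or $G'$-orbit is needed, only the local quotient, which exists for any regular foliation.

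Second, your ``standard inductive argument'' for connectedness of $\Phi$-fibers is not quite standard in the transverse setting. The paper isolates a reduction lemma: on a regular level $N=(h_{v_1},\dots,h_{v_k})^{-1}(c)$, the enlarged subspace $\g''=\g'+\R v_1+\dots+\R v_k$ again acts locally freely, and $\omega|_N$ is transverse symplectic for $\F_{\g''}$. Only with this in hand can one re-apply the Morse--Bott step (now for $\F_{\g''}$) to $h_{v_{k+1}}|_N$ and iterate. You should make this reduction explicit rather than absorbing it into ``standard''.
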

	\begin{exam}
		The setting of Theorem \ref{theo:introconvexity} naturally appears in symplectic manifolds. 
		Let $N$ be a compact connected symplectic manifold equipped with an effective Hamiltonian action of a compact torus $G$. Let $\g'$ be any subspace of $\g$ and let $i\co \g' \to \g$ denote the inclusion. Let $c \in (\g')^*$ be a regular value of $i^* \circ \Phi \co N \to (\g')^*$ and let $M := (i^* \circ \Phi)^{-1}(c)$. Then, $\g'$ acts on $M$ local freely and the restriction $\omega|_M$ of the symplectic form on $N$ is a transverse symplectic form on $M$ with respect to $\F_{\g'}$. The image $\Phi (M)$ coincides with $(i^*)^{-1}(c) \cap \Phi (N)$, and it is a convex polytope. 
	\end{exam}
	
	For a connected complex manifold $M$ equipped with an effective action of a compact torus $G$ which preserves the complex structure $J$ on $M$, the subspace 
	\begin{equation*}
		\g_J := \{ v \in \g \mid X_v = -JX_{v'}, ^\exists v' \in \g\}
	\end{equation*}
	of $\g$ acts on $M$ local freely (see Proposition \ref{prop:localfree}). The foliation $\F_{\g_J}$ whose leaves are $\g_J$-orbits coincides with the foliation which has been considered in \cite{Foutou-Zaffran} in case of LVMB manifolds and the foliation which has been considered in \cite{Panov-Ustinovsky-Verbitsky} in case of moment-angle manifolds equipped with complex-analytic structures. $\F_{\g_J}$ gives a lower bound of $G$-invariant foliations that admit $G$-invariant transverse K\"{a}hler forms such that there exist moment maps with respect to the forms (see Proposition \ref{prop:lowerbound}). 
	
	An effective action of a compact torus $G$ on a connected manifold $M$ is said to be \emph{maximal} if there exists a point $x \in M$ such that $\dim G + \dim G_x = \dim M$ (see \cite{Ishida} for detail). If $M$ is a compact connected complex manifold equipped with a maximal action of a compact torus $G$ preserving the complex structure $J$ on $M$, one can associate a complete fan $q(\Delta)$ in $\g/\g_J$ with $M$. On the other hand, if there exists a $G$-invariant transverse K\"{a}hler form on $M$ with respect to $\F_{\g_J}$ and if there exists a moment map $\Phi \co M \to \g^*$ with respect to the form, then one can find a lift $\widetilde{\Phi} \co M \to (\g/\g_J)^*$ of $\Phi$. The following is the main theorem in this paper:
	\begin{theo}[see also Theorem \ref{theo:polytopal}]\Label{theo:intropolytopal}
		Let $M$ be a compact connected complex manifold equipped with a maximal action of a compact torus $G$ which preserves the complex structure $J$ on $M$. If $\F_{\g_J}$ is transverse K\"{a}hler, then a moment map $\Phi$ with respect to a $G$-invariant transverse K\"{a}hler form exists and $\widetilde{\Phi}(M)$ is a convex polytope normal to $q(\Delta)$. Conversely, if $q(\Delta)$ is polytopal, then $\F_{\g_J}$ is transverse K\"{a}hler. 
	\end{theo}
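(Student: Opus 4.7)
The plan is to prove the two implications separately, with the convexity theorem (Theorem~\ref{theo:introconvexity}) handling the forward direction after suitable preparations, and a Delzant-style construction driving the converse. For the forward direction, suppose $\F_{\g_J}$ is transverse K\"ahler. By averaging a transverse K\"ahler form over $G$, I may assume it is $G$-invariant (averaging preserves transversality and K\"ahlerness since $G$ preserves both $\F_{\g_J}$ and $J$); denote the resulting form by $\omega$. For each $v\in\g$, $G$-invariance and $d\omega=0$ give $d\iota_{X_v}\omega = 0$, and I would produce a moment map $\Phi\co M\to\g^*$ by proving each such closed $1$-form is exact, using the vanishing of the relevant basic first cohomology under the maximal action. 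Transversality gives $\iota_{X_v}\omega = 0$ for $v\in\g_J$, so $\langle\Phi,v\rangle$ is constant on $M$ for such $v$; after a suitable normalization $\Phi$ therefore descends to $\widetilde\Phi\co M\to(\g/\g_J)^*$. Theorem~\ref{theo:introconvexity} then identifies $\widetilde\Phi(M)$ with the convex hull of the images of the common critical points of the $h_v$, so $\widetilde\Phi(M)$ is a convex polytope.

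To show this polytope is normal to $q(\Delta)$, I would match the vertices of $\widetilde\Phi(M)$ with the top-dimensional cones of $q(\Delta)$. By the construction of $q(\Delta)$ from the maximal action, each top-dimensional cone $\sigma$ arises from a stratum of $M$ whose image in the leaf space $M/\F_{\g_J}$ is a single point $[x_\sigma]$; a local equivariant K\"ahler slice transverse to the leaf through $x_\sigma$ exhibits $\widetilde\Phi$ locally as a classical toric moment map, so $\widetilde\Phi(x_\sigma)$ is a vertex of $\widetilde\Phi(M)$ and the outward normal cone there is exactly $\sigma$. This identifies the normal fan of $\widetilde\Phi(M)$ with $q(\Delta)$.

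For the converse, assume $q(\Delta)$ is the normal fan of some polytope $P$. I would construct a $G$-invariant transverse K\"ahler form on $M$ by a Delzant-type reduction: realize $M$ from its fan as the quotient of a moment-angle (LVMB) manifold $Z_P$ associated to $P$ by a complementary subtorus with Lie algebra $\g_J$, and descend the standard K\"ahler form on $Z_P$ (coming from the Delzant construction applied to $P$) to a transverse K\"ahler form on $M$. The main obstacle is this converse direction: extracting an actual K\"ahler form from purely combinatorial polytopal data is not formal and requires either the reduction just sketched or an explicit patching via K\"ahler potentials built from the support function of $P$. A secondary technical point in the forward direction is ensuring that the normal cones at vertices of $\widetilde\Phi(M)$ coincide with the cones of $q(\Delta)$ on the nose rather than merely up to real span, which is where the maximality hypothesis is essential.
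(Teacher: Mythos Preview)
Your overall architecture matches the paper's: average to a $G$-invariant transverse K\"ahler form, produce a moment map, lift it to $(\g/\g_J)^*$, invoke the convexity theorem, and then identify the normal fan of the image with $q(\Delta)$; for the converse, realize $M$ via a moment-angle-type construction and restrict the ambient K\"ahler form. The difference lies in how the paper actually executes the two nontrivial steps, and your proposal handwaves precisely there.

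The first is the existence of the moment map. You assert ``vanishing of the relevant basic first cohomology under the maximal action,'' but this is not a formal consequence of maximality and is the technical crux. The paper does \emph{not} argue directly on $M$: it first uses the structure theorem (Theorem~\ref{theo:maximal}) and the result from \cite{Ishida} that any such $M$ is the base of an equivariant holomorphic principal bundle whose total space is an LVMB manifold $X(\Delta_\Sigma)/H$ with indispensable integer $0$; it then computes $H^1_{\F_{\g_J}}(X(\Delta_\Sigma)/H)=0$ explicitly in coordinates (Lemma~\ref{lemm:basic}), and finally observes that basic forms for $\F_{\g_J}$ transfer bijectively along such bundles. The same reduction handles the converse: the explicit transverse K\"ahler form is built on the LVMB cover as $\omega_{\mathrm{st}}|_{\mathcal{Z}_P}$ and then pushed down. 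Your Delzant sketch is the right picture, but the paper's point is that one must pass through the LVMB model rather than work on $M$ directly.

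The second is the identification of the normal fan. Your ``local equivariant K\"ahler slice'' at a vertex is plausible but the paper's argument (Lemma~\ref{lemm:normal}) is global and more precise: for each $v\in\g$ it shows the critical set of $h_v$ is exactly the union of the closed strata $Y_\sigma$ with $v\in\g_\sigma+\g_J$, and that $h_v$ attains its minimum on $Y_\sigma$ precisely when $q(v)$ lies in the relative interior of $q(\sigma)$. This directly exhibits $\widetilde\Phi(Y_\sigma)$ as the face of $\widetilde\Phi(M)$ with inner normal cone $q(\sigma)$, avoiding the worry you flag about normal cones matching only up to span.
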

	
	As an application of Theorem \ref{theo:intropolytopal}, we show that Conjecture \ref{conj:FZ} is true. 
		
	This paper is organized as follows. In Section \ref{sec:convexity}, we investigate Hamiltonian functions for almost periodic vector fields on transverse symplectic manifold and show the convexity of the image of a moment map with an almost same argument as Atiyah. In Section \ref{sec:holofoli}, we construct a $G$-invariant foliation $\F_{\g_J}$ on a complex manifold equipped with an action of a compact torus $G$. In Section \ref{sec:invtransverse}, we show that the foliation $\F_{\g_J}$ is a lower bound of $G$-invariant foliations that  admit moment maps. In Section \ref{sec:extreme}, we consider the case of maximal torus action and give a proof of the conjecture posed by Cupit-Foutou and Zaffran. 
	\smallskip
	
	\noindent {\bf Convention and notation.} For a smooth manifold $M$ and a smooth foliation $\F$ on $M$, we denote by $T\F$ the subbundle of the tangent bundle $TM$ consisting of vectors tangent to leaves of $\F$. Let $G$ be a compact torus acting on $M$ smoothly. We say that $\F$ is \emph{$G$-invariant} if $T\F$ is a $G$-equivariant subbundle of $TM$. We denote by $\g$ the Lie algebra of $G$. Through the exponential map, $\g$ also acts on $M$. For a subspace $\g'$ that acts on $M$ local freely, we denote by $\F_{\g'}$ the foliation on $M$ whose leaves are $\g'$-orbits. For a point $x \in M$, we denote by $G_x$ the isotropy subgroup of $G$ at $x$. We also denote by $\g_x$ the Lie algebra of $G_x$. Remark that $\g_x$ is not the isotropy subgroup of $\g$ at $x$. For $v \in \g$, we denote by $X_v$ the fundamental vector field generated by $v$ on $M$. For a vector field $X$ on $M$, we denote by $X_x$ the value of $X$ at $x$. For the fundamental vector field, we denote by $(X_v)_x$ the value of $X_v$ at $x$. For a differential form $\omega$, we denote by $\omega_x$ the value of $\omega$ at $x$. We denote by $\mathcal{L}_X\omega$ the Lie derivative for $\omega$ along $X$ and by $\iota_X\omega$ the interior product of $X$ and $\omega$. We identify $\R$ with the Lie algebra of $S^1$ by the differential of the map $t \mapsto e^{2\pi\sqrt{-1}t}$. For $\alpha \co G \to S^1$, we denote by $d\alpha$ the differential at the unit of $G$ and $d\alpha$ is regarded as an element in $\g^*$. We denote by $H_\F^1(M)$ the first basic cohomology group with coefficients in $\R$.

\section{The convexity theorem}\Label{sec:convexity}
	Let $M$ be a smooth manifold and let $\F$ be a smooth foliation on $M$. A \emph{transverse symplectic form} $\omega$ with respect to $\F$ is a closed $2$-form on $M$ whose kernel coincides with $T\F$. 
	Let $\omega$ be a transverse symplectic form on $M$ with respect to $\F$. Let a compact torus $G$ act on $M$ effectively. We assume that the action of $G$ preserves $\omega$ (and hence, $\F$ is $G$-invariant). In this case, by Cartan formula we have that 
	\begin{equation*}
		0= \mathcal{L}_{X_v}\omega = d\iota_{X_v}\omega +\iota_{X_v}d\omega
		= d\iota_{X_v}\omega
	\end{equation*}
	for $v \in \mathfrak{g}$. We say that a smooth map $\Phi \co M \to \mathfrak{g}^*$ is a \emph{moment map} if the function $h_v \co M \to \R$ given by $\langle \Phi (x), v\rangle = h_v(x)$ satisfies that $dh_v = -\iota_{X_{v}}\omega$. A moment map $\Phi$ with respect to $\omega$ exists if and only if $\iota_{X_v}\omega$ is exact for any $v$. In particular, the obstruction for the existence of moment map sits in $H^1_\F(M)$. 
	The purpose in this section is to show the convexity of the image of a moment map under certain conditions by an almost same argument as Atiyah (see \cite{Atiyah}). 
	
	Let $\{\psi_\alpha \co U_\alpha \to V_\alpha\}_{\alpha}$ be foliation charts of $(M, \F)$. The local leaf space $U_\alpha/\F$ is diffeomorphic to an open subset of $\R^{\dim M -\dim \F}$. The quotient map $\pi_\alpha \co U_\alpha \to U_\alpha/\F$ is a fiber bundle whose fibers are diffeomorphic to open balls of dimension $\dim \F$. The transition functions $\psi_{\alpha\beta} \co \psi_\beta(U_\alpha \cap U_\beta) \to \psi_\alpha(U_\alpha\cap U_\beta)$ can be written as 
	\begin{equation*}
		\psi_{\alpha\beta}(x,z) = (\psi_{\alpha\beta}^T(x), \psi_{\alpha\beta}^\F(x,z)) \in \psi_{\alpha}(U_\alpha\cap U_\beta) \subseteq \R^{\dim M -\dim \F} \times \R^{\dim \F}
	\end{equation*}
	for $(x, z) \in \psi_{\beta}(U_\alpha \cap U_\beta) \subseteq \R^{\dim M -\dim \F } \times \R^{\dim \F}$. 
	
	Let $x \in M$ and let $G_x$ denote the isotropy subgroup at $x$ of $G$. Let $\psi \co U \to V$ be a local foliation chart on an open neighborhood at $x$. The local leaf space $U/\F$ is diffeomorphic to an open subset of $\R^{\dim M - \dim \F}$. Let $\pi \co U \to U/\F$ be the quotient map. Since $G_x$ is compact, the intersection $U' := \bigcap_{g \in G_x} g(U) $ is a $G_x$-invariant open neighborhood at $x$. Since $\F$ is $G$-invariant, $\pi (U')$ is a $G_x$-manifold of dimension $\dim M - \dim \F$. Let $\omega$ be a $G$-invariant transverse symplectic form on $M$ with respect to $\F$ and suppose that $\dim \F = \ell$ and $\dim M = 2n + \ell$. $\omega$ descends to a symplectic form $\underline{\omega}$ on $\pi (U')$. By equivariant Darboux theorem, there exist $G_x$-invariant open subsets $U_x$ of $\pi (U')$ and $V_x$ of $T_xM/T_x\F$, a $G_x$-equivariant diffeomorphism $\varphi \co U_x \to V_x$ and a basis $(x_1,\dots, x_n, y_1,\dots, y_n)$ of $(T_xM/T_x\F)^*$ such that 
	\begin{equation*}
		(\varphi^{-1})^*\underline{\omega} = \sum_{i=1}^ndx_i \wedge dy_i
	\end{equation*}
	and 
	\begin{equation*}
		X_v = 2\pi\sum_{i=1}^n \left\langle d\alpha_i, v \right\rangle \left( x_i\frac{\partial}{\partial y_i}-y_i\frac{\partial}{\partial x_i}\right)
	\end{equation*} 
	for $v \in \mathfrak{g}_x$, where $\alpha_1,\dots, \alpha_n \in \Hom (G_x, S^1)$ are weights at $0 \in T_xM/T_x\F$.
	Remark that $\psi|_{\pi^{-1}(U_x)} \co \pi^{-1}(U_x) \to \psi(\pi^{-1}(U_x))$ is a local foliation chart near $x$.  
	We state this fact as a lemma for later use. 
	\begin{lemm}\Label{lemm:eDarboux}
		Let $M$ be a smooth manifold of dimension $2n+\ell$ equipped with an action of a compact torus $G$. Let $\F$ be a $G$-invariant smooth foliation on $M$ of dimension $\ell$ and let $\omega$ be a $G$-invariant transverse symplectic form on $M$ with respcet to $\F$. Then, for any $x \in M$, there exist 
		\begin{itemize}
			\item a local foliation chart $\psi_x \co \widetilde{U_x} \to \widetilde{V_x}$ on an open neighborhood $U_x$ at $x$ such that $\widetilde{U_x}/\F$ carries the action of $G_x$,  
			\item a $G_x$-invariant open neighborhood $V_x$ at $0$ of $T_xM/T_x\F$, 
			\item a $G_x$-equivariant diffeomorphism $\varphi_x \co \widetilde{U_x}/\F \to V_x$, and 
			\item a basis $(x_1,\dots, x_n, y_1,\dots, y_n)$ of $(T_xM/T_x\F)^*$ 
		\end{itemize}
		such that $(\varphi_x^{-1})^*\underline{\omega} = \sum_{i=1}^ndx_i \wedge dy_i$ and 
		\begin{equation*}
			X_v = 2\pi\sum_{i=1}^n \left\langle d\alpha_i, v \right\rangle \left( x_i\frac{\partial}{\partial y_i}-y_i\frac{\partial}{\partial x_i}\right)
		\end{equation*} 
		for $v \in \mathfrak{g}_x$, where $\alpha_1,\dots, \alpha_n \in \Hom (G_x, S^1)$ are weights at $0 \in T_xM/T_x\F$. 
	\end{lemm}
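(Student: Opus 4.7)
The plan is to reduce the statement to the standard equivariant Darboux theorem on the local leaf space of a foliation chart. I would proceed in three steps, all of which are already sketched in the discussion preceding the lemma. First, I pick any foliation chart $\psi \co U \to V$ at $x$. Since $G_x$ is compact and $\F$ is $G$-invariant, the open set $\widetilde{U_x} := \bigcap_{g \in G_x} g(U)$ is a $G_x$-invariant foliation chart around $x$, and the $G_x$-action permutes the plaques of $\F$, so it descends to a smooth $G_x$-action on the local leaf space $\widetilde{U_x}/\F$ with $\pi(x)$ as a fixed point.

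Second, I would verify that $\omega$ descends to a $G_x$-invariant symplectic form $\underline{\omega}$ on $\widetilde{U_x}/\F$, i.e.\ that $\omega$ is basic. For any vector field $X$ tangent to $\F$, the kernel condition $T\F = \ker \omega$ gives $\iota_X \omega = 0$, and Cartan's formula together with $d\omega = 0$ yields $\mathcal{L}_X \omega = 0$; hence $\omega$ descends. Non-degeneracy of the descended form holds because $\ker \omega = T\F$, and $G_x$-invariance of $\underline{\omega}$ is inherited from that of $\omega$.

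Third, since $\pi(x)$ is a fixed point of the $G_x$-action on $(\widetilde{U_x}/\F, \underline{\omega})$, I would apply the equivariant Darboux theorem (equivalently, equivariant symplectic linearization at a torus-fixed point). This produces a $G_x$-equivariant diffeomorphism $\varphi_x$ from a $G_x$-invariant neighborhood of $\pi(x)$ onto a $G_x$-invariant neighborhood $V_x$ of $0$ in $T_xM/T_x\F$ such that $(\varphi_x^{-1})^*\underline{\omega} = \sum_{i=1}^n dx_i \wedge dy_i$ in a suitable weight basis $(x_1,\dots,x_n, y_1,\dots,y_n)$ of $(T_xM/T_x\F)^*$. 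The linear $G_x$-action on $T_xM/T_x\F$ then decomposes into two-dimensional weight subspaces, and the pullback of the standard rotational vector fields gives the stated expression for $X_v$, $v \in \g_x$, in terms of the weights $\alpha_i \in \Hom(G_x, S^1)$.

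The only delicate step is the descent in step two; but since we work inside a single foliation chart where the leaf space is an honest manifold, this reduces to the well-known principle that a $G_x$-invariant, closed, basic $2$-form whose kernel is exactly $T\F$ passes to a genuine symplectic form on the quotient. Everything else is a direct application of equivariant Darboux at a torus-fixed point, so I do not anticipate a serious obstacle.
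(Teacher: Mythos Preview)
Your proposal is correct and follows exactly the same route as the paper: the paper's proof is in fact the paragraph \emph{preceding} the lemma (the lemma is then stated as a summary), and that paragraph does precisely your three steps---intersect a foliation chart over $G_x$ to get a $G_x$-invariant one, observe that $\omega$ descends to a symplectic form on the local leaf space, and apply the equivariant Darboux theorem at the fixed point $\pi(x)$. Your step two spells out the basicness of $\omega$ (via $\iota_X\omega=0$ and $\mathcal{L}_X\omega=0$) more explicitly than the paper, which simply asserts the descent, but otherwise the arguments are identical.
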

	Let $M, G, \F, \omega$ be as Lemma \ref{lemm:eDarboux}. Let $v \in \mathfrak{g}$ and suppose that there exists a smooth function $h_v \co M \to \R$ such that $-\iota_{X_v}\omega = dh_v$. Since $\omega$ is transverse symplectic with respect to $\F$, a point $x \in M$ is a critical point of $h_v$ if and only if $(X_v)_x \in T_x\F$. Because of this, unfortunately, we can not deduce the property that $h_v$ is non-degenerate for general $\F$, not like symplectic case. Let $\mathfrak{g}'$ be a subspace of $\mathfrak{g}$ such that $\mathfrak{g}'$ act on $M$ local freely. Since $G$ is abelian, $\F_{\mathfrak{g}'}$ is a $G$-invariant foliation. 
	\begin{lemm}\Label{lemm:Morse}
		Let $M$ be a smooth manifold equipped with an action of a compact torus $G$. Let $\mathfrak{g}'$ be a subspace of $\mathfrak{g}$ such that $\mathfrak{g}'$ acts on $M$ local freely. Let $\omega$ be a $G$-invariant transverse symplectic form on $M$ with respect to $\F_{\mathfrak{g}'}$. Let $v \in \mathfrak{g}$ and suppose that there exists a smooth function $h_v \co M \to \R$ such that $dh_v = -\iota_{X_v}\omega$. Then, $h_v$ is a non-degenerate function and the index of each critical submanifold is even. 
	\end{lemm}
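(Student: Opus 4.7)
The plan is to reduce the problem to a standard quadratic normal form on the transverse slice via Lemma~\ref{lemm:eDarboux}. First, I would record two preliminary properties of $h_v$. Since $G$ is abelian and preserves $\omega$, the form $\iota_{X_v}\omega$ is $G$-invariant, whence so is $h_v$. Moreover, for any $v' \in \g'$ the vector field $X_{v'}$ lies in $\ker\omega = T\F_{\g'}$, and therefore
\[
  dh_v(X_{v'}) = -\omega(X_v, X_{v'}) = (\iota_{X_{v'}}\omega)(X_v) = 0.
\]
Thus $h_v$ is leaf-basic and descends locally to a smooth function $\underline{h}_v$ on any transverse slice.

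To analyze a critical point $x$, note that $(dh_v)_x = 0$ iff $(X_v)_x \in T_x\F_{\g'}$; since $\g'$ acts locally freely, there is a unique $v_0 \in \g'$ with $(X_v)_x = (X_{v_0})_x$, and setting $v_1 := v - v_0$ gives $v_1 \in \g_x$. Lemma~\ref{lemm:eDarboux} applied at $x$ then furnishes Darboux coordinates $(x_1,\dots,x_n,y_1,\dots,y_n)$ on the slice in which $\underline{\omega} = \sum_i dx_i\wedge dy_i$ and, because $v_1 \in \g_x$,
\[
  \underline{X}_{v_1} = 2\pi\sum_{i=1}^n \langle d\alpha_i, v_1\rangle\bigl(x_i\partial_{y_i} - y_i\partial_{x_i}\bigr).
\]
Since $X_{v_0}$ is tangent to $\F_{\g'}$, it pushes forward to zero on the slice, so $\underline{X}_v = \underline{X}_{v_1}$; integrating $d\underline{h}_v = -\iota_{\underline{X}_v}\underline{\omega}$ yields the quadratic model
\[
  \underline{h}_v = \pi\sum_{i=1}^n \langle d\alpha_i, v_1\rangle(x_i^2 + y_i^2) + \mathrm{const}.
\]

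From this normal form the conclusion reads off directly. On the slice, the critical set of $\underline{h}_v$ is the coordinate subspace defined by $x_i = y_i = 0$ whenever $\langle d\alpha_i, v_1\rangle \neq 0$, and the Hessian in the complementary directions is diagonal with eigenvalues $2\pi\langle d\alpha_i, v_1\rangle$, each of multiplicity two. Lifting back through the foliation chart, near $x$ the critical set of $h_v$ is the product of this subspace with a leaf neighborhood, hence a smooth submanifold of $M$; the leaf directions sit in the kernel of the Hessian precisely because $h_v$ is leaf-basic. Therefore $h_v$ is Morse-Bott nondegenerate, and the index at each critical submanifold equals $2\cdot\#\{i : \langle d\alpha_i, v_1\rangle < 0\}$, an even integer.

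The decisive step is setting up the quadratic normal form; this requires the decomposition $v = v_0 + v_1$ with $v_1 \in \g_x$, whose existence and uniqueness at every critical point are guaranteed precisely by the locally free action of $\g'$. Once this decomposition is in hand, Lemma~\ref{lemm:eDarboux} does all the remaining work.
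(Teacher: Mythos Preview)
Your proof is correct and follows essentially the same approach as the paper's: decompose $v$ at a critical point as an element of $\g'$ plus an element of $\g_x$, pass to the local leaf space, and invoke Lemma~\ref{lemm:eDarboux} to obtain the quadratic model $\underline{h}_v = \mathrm{const} + \pi\sum_i \langle d\alpha_i, v_1\rangle(x_i^2+y_i^2)$, from which nondegeneracy and evenness of the index follow. The paper writes out the chain of pullbacks through $\pi$ and $\varphi_x$ a bit more explicitly, but the argument is the same.
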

	\begin{proof}
		Let $x \in M$ be a critical point of $h_v$. Then, $(X_v)_x \in T_x\F_{\mathfrak{g}'}$ implies that there exist $v _x \in \mathfrak{g}_x$ and $v' \in \mathfrak{g}'$ such that $v = v_x + v'$. Since $\iota_{X_{v'}}\omega =0$, we have that $\iota_{X_v}\omega = \iota_{X_{v_x}}\omega$. Since $\iota_{X_{v}}\omega$ is basic for $\F_{\mathfrak{g}'}$, so is $h_v$. Let $\psi_x \co \widetilde{U_x} \to \widetilde{V_x}$, $V_x$, $\varphi_x$, $(x_1,\dots, x_n, y_1,\dots ,y_n)$ be as Lemma \ref{lemm:eDarboux}. Since $h_v$ is basic for $\F_{\g'}$, $h_v$ descends to a smooth function $\underline{h_v} \co \widetilde{U_x}/\F_{\mathfrak{g}'} \to \R$ such that $\pi^*\underline{h_v} = h_v$, where $\pi \co \widetilde{U_x} \to \widetilde{U_x}/\F_{\g'}$ denote the quotient map. By definition of $\F_{\mathfrak{g}'}$, $\pi$ sends the fundamental vector field $X_v$ generated by $v$ on $\widetilde{U_x}$ to the fundamental vector field $X_{v_x}$ generated by $v_x$ on $\widetilde{U_x}/\F_{\mathfrak{g}'}$. Also, since $\varphi_x$ is $G_x$-equivariant, $\varphi_x$ sends $X_{v_x}$ on $\widetilde{U_x}/\F_{\g'}$ to $X_{v_x}$ on $T_xM/T_x\F_{\g'}$. 
		
		Therefore 
		\begin{equation*}
			dh_v = -\iota_{X_v}\omega = -\iota_{X_{v_x}}\omega = \pi^* (-\iota_{X_{v_x}}\underline{\omega}) = \pi^*\circ \varphi_x^*(-\iota_{X_{v_x}}(\varphi_x^{-1})^*\underline{\omega}). 
		\end{equation*}
		On the other hand, 
		\begin{equation*}
			dh_v = d(\pi^*\underline{h_v}) = \pi^*(d\underline{h_v}) = \pi^*\circ (\varphi_x^{-1})^*(d((\varphi_x^{-1})^*\underline{h_v})). 
		\end{equation*}
		Since $\pi^*$ is injective and $\varphi_x$ is a diffeomorphism, we have that $d((\varphi_x^{-1})^*\underline{h_v}) = -\iota_{X_{v_x}}(\varphi_x^{-1})^*\underline{\omega}$. Let $\alpha_1,\dots, \alpha_n \in \Hom (G_x, S^1)$ be the weights at the origin in $T_xM/T_x\F_{\g'}$. Then, $X_{v_x}$ on $T_xM/T_x\F_{\g'}$ can be represented as 
		\begin{equation*}
			X_{v_x} = 2\pi\sum_{i=1}^n \langle d\alpha_i, v_x\rangle \left(x_i \frac{\partial}{\partial y_i} -y_i\frac{\partial}{\partial x_i}\right)
		\end{equation*}
		with the coordinates $(x_1,\dots, x_n, y_1,\dots, y_n)$. Therefore
		\begin{equation*}
			 -\iota_{X_{v_x}}(\varphi_x^{-1})^*\underline{\omega} = 2\pi\sum_{i=1}^n \langle d\alpha_i, v_x\rangle (x_idx_i+y_idy_i)
		\end{equation*}
		and hence 
		\begin{equation}\Label{eq:index}
			(\varphi_x^{-1})^*\underline{h_v} = (\varphi_x^{-1})^*\underline{h_v}(0)+ \pi \sum_{i=1}^n {\langle d\alpha_i, v_x\rangle}(x_i^2+y_i^2).
		\end{equation}
		Therefore $(\varphi_x^{-1})^*\underline{h_v}$ is nondegenerate at $0$ and the index at $0$ is twice as many as the number of $\alpha_i$ such that $\langle d\alpha_i, v_x\rangle <0$. 
		Since $\varphi_x \circ \pi \co \widetilde{U_x} \to V_x$ is a fiber bundle, $h_v$ is nondegenerate at $x$ and the index at $x$ is twice as many as the number of $\alpha_i$ such that $\langle d\alpha_i, v_x\rangle <0$, proving the lemma. 
	\end{proof}
	\begin{rema}\Label{rema:localminimum}
		In the proof of Lemma \ref{lemm:Morse}, it follows from \eqref{eq:index} that $x$ attains a local minimum of $h_v$ if and only if $\langle d\alpha_i, v_x\rangle \geq 0$ for all $\alpha_i$. 
	\end{rema}
	\begin{rema}
		We are not sure whether Lemma \ref{lemm:Morse} holds even if we replace $\F_{\g'}$ to any $G$-invariant foliation $\F$ or not. 
	\end{rema}
	
	The following is the key of the convexity theorem. 
	\begin{lemm}[{\cite[Lemma 2.1]{Atiyah}}]\Label{lemm:levelset}
		Let $\phi \co N \to \R$ be a non-degenerate function (in the sense of Bott) on the compact connected manifold $N$, and assume that neither $\phi$ or $-\phi$ has a critical manifold of index $1$. Then $\phi^{-1}(c)$ is connected (or empty) for every $c \in \R$. 
	\end{lemm}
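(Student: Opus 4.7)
The plan is to follow Atiyah's original Morse-theoretic argument. I would first extract structural consequences of the index hypothesis from a Bott handle decomposition, then analyze how regular level sets change under surgery across each critical value, and finally pass to critical levels via a nested-compactness limit.

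I would begin by showing $\phi$ has a unique local minimum and a unique local maximum. By Bott--Morse theory, as $c$ increases the sublevel set $N^-_c = \phi^{-1}((-\infty, c])$ is built by attaching, at each critical manifold $C$ of index $\lambda$, a $D^\lambda$-disk bundle over $C$ along an $S^{\lambda - 1}$-bundle. For $\lambda \geq 2$ and $C$ connected, the attaching sphere bundle is connected, so $\pi_0(N_c^-)$ is unaffected; a $0$-handle adds a fresh component, and the hypothesis excludes $1$-handles. Hence $\pi_0(N_c^-)$ equals the number of index-$0$ critical manifolds with value $\leq c$, and since $N$ is connected there must be a unique local minimum $C_{\min}$. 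The same reasoning applied to $-\phi$ yields a unique local maximum $C_{\max}$.

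I would next let $f(c) = |\pi_0(\phi^{-1}(c))|$ for regular $c$, which is locally constant. For $c$ just above $\min \phi$, the level set is the boundary of a tubular neighborhood of $C_{\min}$, i.e.\ an $S^{\mu_{\min} - 1}$-bundle over $C_{\min}$ with $\mu_{\min} = \dim N - \dim C_{\min}$. Since the hypothesis gives $\mu_{\min} \neq 1$, either $\phi$ is constant or $\mu_{\min} \geq 2$, so $f(c) = 1$. To show $f$ does not jump at an intermediate critical value $c_0$, let $C_1, \ldots, C_k$ be the critical manifolds at level $c_0$. The first step gives $\lambda_j, \mu_j \neq 0$, and the hypothesis gives $\lambda_j, \mu_j \neq 1$, so $\lambda_j, \mu_j \geq 2$. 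Pick tubular neighborhoods $T_j$ in Bott--Morse normal form $\phi - c_0 = -|x|^2 + |y|^2$ and set $T = \bigsqcup T_j$. Away from $T$, the rescaled gradient flow $\nabla\phi / \|\nabla\phi\|^2$ identifies $\phi^{-1}(c_0 - \epsilon) \setminus T$ with $\phi^{-1}(c_0 + \epsilon) \setminus T$. Inside each $T_j$, the piece $\phi^{-1}(c_0 + \epsilon) \cap T_j$ is a bundle with fiber $D^{\lambda_j} \times S^{\mu_j - 1}$ over $C_j$, and its seam $\phi^{-1}(c_0 + \epsilon) \cap \partial T_j$ is an $(S^{\lambda_j - 1} \times S^{\mu_j - 1})$-bundle over $C_j$; both are connected because $\lambda_j, \mu_j \geq 2$. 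Attaching a connected piece along a connected subset does not change $\pi_0$, so
\[
\pi_0(\phi^{-1}(c_0 + \epsilon)) = \pi_0(\phi^{-1}(c_0 + \epsilon) \setminus T) = \pi_0(\phi^{-1}(c_0 - \epsilon) \setminus T) = \pi_0(\phi^{-1}(c_0 - \epsilon)),
\]
and $f \equiv 1$ for every regular $c$.

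Finally, for a critical value $c_0$ I would write $\phi^{-1}(c_0) = \bigcap_n \phi^{-1}([c_0 - 1/n, c_0 + 1/n])$. For small $1/n$ the thickening is compact and connected: it is built from the connected regular level set $\phi^{-1}(c_0 - 1/n)$ by attaching the ascending and descending disk bundles of the $C_j$ along connected $S^{\lambda_j - 1}$- and $S^{\mu_j - 1}$-bundles. A nested intersection of nonempty compact connected subsets of a Hausdorff space is connected, so $\phi^{-1}(c_0)$ is connected. The hard part is the surgery analysis in the second step: verifying that the local model produces a connected seam hinges on the connectedness of $S^{\lambda - 1} \times S^{\mu - 1}$, which holds exactly when neither $\lambda$ nor $\mu$ equals $1$---precisely the content of the hypothesis on $\phi$ and $-\phi$.
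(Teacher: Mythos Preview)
The paper does not prove this lemma at all: it is quoted verbatim as \cite[Lemma 2.1]{Atiyah} and used as a black box, so there is no in-paper argument to compare against. Your proposal reconstructs Atiyah's original Morse--Bott argument, and the outline is sound: the index hypothesis forces a unique local minimum and maximum, the surgery across an interior critical value replaces an $S^{\lambda-1}\times D^\mu$-bundle piece by a $D^\lambda\times S^{\mu-1}$-bundle piece along a common $S^{\lambda-1}\times S^{\mu-1}$-bundle seam (all connected once $\lambda,\mu\geq 2$), and the nested-intersection step handles singular levels. One small point worth making explicit in your write-up: the equality $\pi_0(\phi^{-1}(c_0-\epsilon))=\pi_0(\phi^{-1}(c_0-\epsilon)\setminus T)$ uses the symmetric description of $\phi^{-1}(c_0-\epsilon)\cap T_j$ as an $S^{\lambda_j-1}\times D^{\mu_j}$-bundle with the same connected seam, which you invoke implicitly but only spell out for the $c_0+\epsilon$ side.
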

	
	By Lemmas \ref{lemm:Morse} and \ref{lemm:levelset}, if $M$ is compact then the level set $h_v^{-1}(c)$ is connected unless empty. Moreover, if $c$ is a regular value then $h_v^{-1}(c)$ is a connected submanifold of $M$. Since $G$ is abelian, $dh_v = -\iota_{X_v}\omega$ is $G$-invariant. Therefore $h_v$ is also $G$-invariant. Therefore $h_v^{-1}(c)$ is $G$-invariant for $c \in \R$. 
	
	\begin{lemm}\Label{lemm:reduction}
		Let $M$ be a compact connected manifold equipped with an action of a compact torus $G$. Let $\g'$ be a subspace of $\g$ such that $\g'$ acts on $M$ local freely. Let $\omega$ be a $G$-invariant transverse symplectic form on $M$ with respect to $\F_{\g'}$. Let $v_1,\dots, v_k \in \g$ and suppose that there exists a smooth function $h_{v_i}$ such that $dh_{v_i} = -\iota_{X_{v_i}}\omega$ for $i=1,\dots, k$. Let $c \in \R^k$ be a regular value of $h = (h_{v_1},\dots, h_{v_k}) \co M \to \R^k$. Then, $\g'' = \g' + \R v_1+\dots +\R v_k$ acts on $h^{-1}(c)$ local freely and $\omega|_{h^{-1}(c)}$ is a transverse symplectic form with respect to the foliation $\F_{\g''}$.  
	\end{lemm}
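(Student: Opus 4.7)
The plan is to verify, in order, that $h^{-1}(c)$ is $\g''$-invariant, that the $\g''$-action on it is locally free, and that $\omega|_{h^{-1}(c)}$ has kernel exactly $T\F_{\g''}$ at each point. At the outset I would replace each $h_{v_i}$ by its $G$-average; this preserves $dh_{v_i}$ and produces a $G$-invariant Hamiltonian, so that $h$ itself is $G$-invariant and $h^{-1}(c)$ is a $G$-stable submanifold. Since $\g'' \subseteq \g$, the $\g''$-action on $M$ restricts to an action on $h^{-1}(c)$.

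For local freeness, pick $x \in h^{-1}(c)$ and $v \in \g''$ with $(X_v)_x = 0$, and write $v = v' + \sum_{i=1}^k a_i v_i$ with $v' \in \g'$. Contracting $0 = (X_v)_x$ with $\omega_x$ and using $\iota_{X_{v'}}\omega = 0$ (because $X_{v'} \in T\F_{\g'} = \ker \omega$) together with $\iota_{X_{v_i}}\omega = -dh_{v_i}$, I obtain $\sum_i a_i (dh_{v_i})_x = 0$ in $T_x^*M$. The regular-value hypothesis says exactly that the $(dh_{v_i})_x$ are linearly independent, so all $a_i$ vanish; then $v = v' \in \g'$, and local freeness of $\g'$ on $M$ forces $v' = 0$.

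For the transverse symplectic property, I would work pointwise with $\omega$-annihilators. Put $W := \mathrm{span}\{(X_{v_1})_x, \ldots, (X_{v_k})_x\} \subseteq T_xM$. The identity $dh_{v_i} = -\iota_{X_{v_i}}\omega$ gives
\begin{equation*}
T_x h^{-1}(c) = \ker (dh)_x = W^{\omega},
\end{equation*}
where $W^\omega := \{u \in T_xM : \omega_x(u, w) = 0 \text{ for all } w \in W\}$. Consequently the kernel of $\omega|_{h^{-1}(c)}$ at $x$ is $W^\omega \cap (W^\omega)^\omega$. Descending to the quotient $T_xM/T_x\F_{\g'} = T_xM/\ker \omega_x$, on which $\omega$ induces a genuine symplectic form, the standard double-annihilator identity yields $(W^\omega)^\omega = W + T_x \F_{\g'}$. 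This subspace is precisely $T_x\F_{\g''}$, and it already sits inside $W^\omega$ because $h^{-1}(c)$ is $\g''$-stable, so $\ker(\omega|_{h^{-1}(c)})_x = T_x\F_{\g''}$, as required.

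The step I expect to be the most delicate is the double-annihilator identity $(W^\omega)^\omega = W + \ker \omega$: it fails for generic degenerate $2$-forms and must be reduced to the honest symplectic case by quotienting out the leafwise kernel $T_x\F_{\g'}$. Everything else is an unraveling of definitions together with the regular-value hypothesis.
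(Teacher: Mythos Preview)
Your argument is correct and follows essentially the same route as the paper: both show $T_x h^{-1}(c) = (T_x\F_{\g''})^{\omega}$ and then identify the kernel of the restricted form via the double-annihilator identity in the symplectic quotient $T_xM/T_x\F_{\g'}$, with the local-freeness step handled by the linear independence of the $(dh_{v_i})_x$. Your averaging step is harmless but unnecessary, since $dh_{v_i}$ is already $G$-invariant and $M$ is connected with $G$ compact, forcing each $h_{v_i}$ itself to be $G$-invariant (as the paper notes just before the lemma).
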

	\begin{proof}
		Let $x \in h^{-1}(c)$. Since $c$ is a regular value, $(-\iota_{X_{v_i}}\omega)_x$ is linearly independent for all $i$. This together with that the action of $\g'$ is local free yields that $(X_{v''})_x =0$ if and only if $v'' =0$ for $v'' \in \g''$. Therefore the action of $\g''$ is local free. 
		
		$T_x(h^{-1}(c))$ is given by $\ker (dh)_x = (T_x\F_{\g''})^\perp$, where $(T_x\F_{\g''})^\perp$ denotes the annihilator of $T_x\F_{\g''}$ with respect to $\omega$. Therefore $\omega_x$ descends to a symplectic form on $T_x(h^{-1}(c))/T_x\F_{\g''}$. It turns out that $Y_x \in \ker (\omega|_{h^{-1}(c)})_x$ if and only if $Y_x \in T_x\F_{\g''}$. Therefore $\omega|_{h^{-1}(c)}$ is a transverse symplectic form with respect to $\F_{\g''}$, proving the lemma.
	\end{proof}
	Now we are in a position to prove the convexity theorem. 
	\begin{theo}\Label{theo:convexity}
		Let $M$ be a compact connected manifold equipped with an action of a compact torus $G$. Let $\g'$ be a subspace of $\g$ such that $\g'$ acts on $M$ local freely. Let $\omega$ be a $G$-invariant transverse symplectic form on $M$ with respect to $\F_{\g'}$. Let $v_1,\dots, v_k \in \g$ and suppose that there exists a smooth function $h_{v_i}$ such that $dh_{v_i} = -\iota_{X_{v_i}}\omega$ for $i=1,\dots, k$. Put $h = (h_1,\dots, h_k) \co M \to \R^k$. Then the followings hold:
		\begin{enumerate}
			\item[$(A_k)$] For $c \in \R^k$, the fiber $h^{-1}(c)$ is connected unless empty. 
			\item[$(B_k)$] $h(M)$ is convex. 
			\item[$(C_k)$] If $Z_1,\dots, Z_N$ are the connected components of the set of common critical points of $h_{v_i}$, then $h(Z_j)$ is a point $c_j$ and $h(M)$ is a convex hull of $c_1,\dots, c_N$. 
		\end{enumerate}
	\end{theo}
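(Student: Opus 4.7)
The plan is a joint induction on $k$, parallel to Atiyah's argument in \cite{Atiyah}, with Lemmas \ref{lemm:Morse}, \ref{lemm:levelset}, and \ref{lemm:reduction} playing the role of the Morse-theoretic ingredients in the classical symplectic proof. Throughout we abbreviate $h' := (h_{v_1},\ldots, h_{v_{k-1}})$. For the base case $k=1$, Lemma \ref{lemm:Morse} shows that $h_{v_1}$ is Bott-nondegenerate with all critical submanifolds of even index, as is $-h_{v_1}=h_{-v_1}$; Lemma \ref{lemm:levelset} then yields $(A_1)$, and $(B_1)$, $(C_1)$ follow because $h_{v_1}(M)$ is a closed interval whose endpoints are attained on critical submanifolds (with the identification of the extrema in Remark \ref{rema:localminimum}).

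For the inductive step, the key move in $(A_k)$ is Lemma \ref{lemm:reduction}. Assuming $(A_{k-1})$, pick a regular value $c' \in \R^{k-1}$ of $h'$, so that $N := (h')^{-1}(c')$ is compact, connected by the inductive hypothesis, and carries a locally free action of $\g'' := \g' + \R v_1 + \cdots + \R v_{k-1}$ with $\omega|_N$ transverse symplectic. Then $h_{v_k}|_N$ is a Hamiltonian for $X_{v_k}$ on $N$, so Lemmas \ref{lemm:Morse} and \ref{lemm:levelset} give that its level sets are connected; hence $h^{-1}(c', c_k)$ is connected for every $c_k \in \R$. For singular $c'$, approximate by regular values via Sard's theorem and pass to the limit, using compactness of fibres to preserve connectedness.

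Statement $(B_k)$ follows from $(A_{k-1})$ by a perpendicular projection trick. For $p, q \in h(M)$, set $W := (q-p)^\perp \subseteq \R^k$ and let $\pi_W \co \R^k \to W$ be the orthogonal projection; choosing a basis of $W$ expresses $\pi_W \circ h$ as $(h_{w_1},\ldots, h_{w_{k-1}})$ with $w_i \in \g$. By $(A_{k-1})$, $(\pi_W \circ h)^{-1}(\pi_W(p))$ is a connected subset of $M$ containing preimages of both $p$ and $q$, and the linear functional $\langle q-p, \cdot\rangle \circ h$ takes every value between $\langle q-p, p\rangle$ and $\langle q-p, q\rangle$ on this connected set, producing the whole segment $[p,q]$ in $h(M)$.

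For $(C_k)$, first observe that $h$ is constant on each $Z_j$ because every $h_{v_i}$ has vanishing derivative on the critical submanifold of $h_{v_i}$ in which $Z_j$ sits; Lemma \ref{lemm:Morse} plus compactness of $M$ also ensures there are only finitely many $Z_j$. The inclusion $\conv(c_1,\ldots,c_N) \subseteq h(M)$ is immediate from $(B_k)$. For the reverse inclusion, use the identity that $x$ is a critical point of $h_v$ iff $(X_v)_x \in T_x\F_{\g'}$ iff $v \in \g_x + \g'$. Since the family $\{\g_x + \g' : x \in M\}$ of subspaces of $\g$ is finite by compactness, a dense open subset of $a \in \R^k$ has the property that $v := \sum a_i v_i$ lies in a given $\g_x + \g'$ precisely when all of $v_1,\ldots,v_k$ do; for such $a$ the critical set of $h_v$ coincides with the common critical set, so $\min_M h_v = \min_j \langle a, c_j\rangle$. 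By continuity in $a$, this equality extends to all $a \in \R^k$, and a standard separating-hyperplane argument then forces $h(M) \subseteq \conv(c_1,\ldots,c_N)$. The hardest point is this last step: arranging that a generic direction $v$ detects \emph{exactly} the common critical set and commuting the limit in $a$ with the $\arg\min$ on $M$. This rests on the finiteness of isotropy types on the compact $G$-manifold $M$.
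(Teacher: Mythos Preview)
Your argument is correct and follows the same three–step induction as the paper (which is Atiyah's scheme): $(A_1)$ from Lemmas \ref{lemm:Morse} and \ref{lemm:levelset}, $(A_{k-1})\Rightarrow (B_k)$ by projecting onto a hyperplane, and $(A_{k-1})\Rightarrow (A_k)$ via Lemma \ref{lemm:reduction} on a regular fibre of $h'$ together with a density/limit argument for singular fibres.  The only substantive difference is in $(C_k)$.  The paper passes to the subtorus $H=\overline{\exp(\g'+\R v_1+\cdots+\R v_k)}$ and chooses $v=\sum a_iv_i+u'$ so that the one–parameter subgroup $\{\exp(tv)\}$ is dense in $H$; then any critical point of $h_v$ is forced to have large $H$-isotropy, hence to be a common critical point.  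You replace this by the observation that there are only finitely many subspaces $\g_x+\g'$, so for $a$ outside a finite union of proper linear subspaces, $\sum a_iv_i\in\g_x+\g'$ iff every $v_i\in\g_x+\g'$, and the critical set of $h_v$ equals the common critical set.  Both yield $\min_M h_v=\min_j\langle a,c_j\rangle$ on a dense set of $a$ and finish by continuity and separating hyperplanes; your version is a clean, more elementary variant, and incidentally the choice of a generic $a$ also gives the finiteness of the $Z_j$ directly via Lemma \ref{lemm:Morse} applied to that single $h_v$.

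One small point to tighten: in your inductive step for $(A_k)$ you appeal to Sard's theorem to approximate a singular $c'$ by regular values of $h'$, but this presupposes that $h'$ has some regular value.  The paper handles the complementary case explicitly: if $h$ (or equivalently $h'$) has no regular value, then on the open dense principal stratum the differentials are everywhere linearly dependent, hence some $dh_{v_i}$ is a global linear combination of the others and one reduces to $(A_{k-1})$.  Inserting this one sentence makes your induction complete.
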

	\begin{proof}
		The proof consists of following steps.
		\begin{enumerate}
			\item[Step 1.] $(A_k)$ implies $(B_{k+1})$. 
			\item[Step 2.] $(A_k)$ holds by induction on $k$. 
			\item[Step 3.] $(B_k)$ implies $(C_k)$.
		\end{enumerate}
		
	 	Remark that it follows from the connectedness of $M$ that $(B_1)$ holds because $h(M)$ is a closed interval in $\R$. 
		
		Step 1. Assume that $(A_k)$ holds. Let $\pi \co \R^{k+1} \to \R^k$ be any linear projection given by $\pi(e_i) = \sum_{j=1}^ka_{ij}e_j$ for $i=1,\dots, k+1$. The composition $h' := \pi \circ h \co M \to \R^k$ satisfies the assumption of the theorem. Namely, $j$-th component of $h'$ is a smooth function $\sum_{i=1}^{k+1}a_{ij}h_{v_i}$, but 
		\begin{equation*}
			d\left(\sum_{i=1}^{k+1}a_{ij}h_{v_i}\right) = -\iota_{X_{\left(\sum_{i=1}^{k+1}a_{ij}v_i\right)}}\omega. 
		\end{equation*}
		Therefore each fiber of $h'$ is connected unless empty. Let $x, y \in h(M)$ and assume that $\pi$ is surjective and $\pi (x) = \pi (y) = c$. Since the fiber $\pi^{-1}(c)$ is a line in $\R^{k+1}$, it suffices to see that $h(M) \cap \pi^{-1}(c)$ is connected. Since $h' = \pi \circ h$, we have that $h(M) \cap \pi^{-1}(c) = h(h'^{-1}(c))$. Since $h$ is continuous and $h'^{-1}(c)$ is connected, $h(M) \cap \pi^{-1}(c)$ is connected, proving that $(A_k)$ implies $(B_{k+1})$. 
		
		Step 2. It follows from Lemmas \ref{lemm:Morse} and \ref{lemm:levelset} that $(A_1)$ holds. 
		Assume that $(A_k)$ holds. Let $v_1,\dots, v_{k+1} \in \mathfrak{g}$ and assume that there exists a smooth function $h_{v_i} \co M \to \R$ such that $dh_{v_i} = -\iota_{X_{v_i}}\omega$ for $i=1,\dots, k+1$. Let $h =(h_{v_1},\dots, h_{v_{k+1}})$ and let $c = (c_1,\dots, c_{k+1})$ be a point in $\R^{k+1}$. We want to show that $h^{-1}(c) = h_{v_1}^{-1}(c_1) \cap \dots \cap h_{v_{k+1}}^{-1}(c_{k+1})$ is connected unless empty. If $h$ has no regular value, then one of $dh_{v_i}$ is a linear combination of the others. By assumption that $(A_k)$ holds, we are done. Assume that $h$ has a regular value. Then, the set of regular values is dense in $h(M)$. By continuity, we only need to show that $h^{-1}(c)$ is connected for any regular value $c$. Then, $N := h_{v_1}^{-1} (c_1) \cap \dots \cap h_{v_k}^{-1}(c_k)$ is a connected submanifold by $(A_k)$. Moreover, it follows from Lemma \ref{lemm:reduction} that $\omega|_{N}$ is a transverse symplectic form on $N$ with respect to $\F_{\g''}$ on $N$, where $\g'' = \g + \R v_1+\dots + \R v_k$. The function $h_{v_{k+1}}|_N$ satisfies that $dh_{v_{k+1}}|_N = -\iota_{X_{v_{k+1}}}\omega|_N$. Therefore by Lemmas \ref{lemm:Morse} and \ref{lemm:levelset}, $(h_{v_{k+1}}|_N) ^{-1}(c_{k+1})$ is connected. Therefore $h^{-1}(c) = h_{v_1}^{-1}(c_1) \cap \dots \cap h_{v_{k+1}}^{-1}(c_{k+1})$ is connected, proving that $(A_k)$ holds for all $k$. 
		
		Step 3. The former assertion that states that $h(Z_j)$ is a point $c_j$ is obvious. Let $H$ be the closure of $\exp (\g'')$ in $G$. Let $x$ be a common critical point of $h_{v_1},\dots, h_{v_k}$. Since $(dh_{v_i})_x = (-\iota_{X_{v_i}}\omega)_x =0$, we have that $(X_{v_i})_x \in T_x\F_{\g'}$ for $i=1,\dots, k$. Therefore there exists $v_{i,x} \in \mathfrak{h}_x$ and $v_i' \in \g'$ such that $v_i = v_{i,x} + v_i'$ for $i=1,\dots, k$. Let $H_x^0$ denote the identity component of $H_x$. Then, $\{ \exp (t_1v_{1,x})\cdot \dots \cdot  \exp(t_kv_{k,x}) \mid t_i \in \R\}$ is dense in $H_x^0$ and $\exp (\mathfrak{h}_x +\g')$ is dense in $H$. Conversely, for a subtorus $H'$ of $H$, if $\exp (\mathfrak{h}' + \g')$ is dense in $H$, then each fixed point $x \in M^{H'}$ is a common critical point of $h_{v_1},\dots, h_{v_k}$. Let $u ' \in \g'$ and $v := \sum_{i=1}^ka_iv_i + u' \in \g''$ such that $\{\exp (tv) \mid t \in \R\}$ is dense in $H$. Put $h_v := \sum_{i=1}^ka_ih_{v_i}$. We claim that each critical point of $h_v$ is a common critical point of $h_{v_1},\dots, h_{v_k}$. Let $x$ be a critical point of $h_v$. Since $(dh_v)_x = (-\iota_{X_v}\omega)_x =0$, there exists $v_x \in \mathfrak{h}_x$ and $v' \in \g'$ such that $v = v_x + v'$. Since $\{\exp (tv)\mid t\in \R\}$ is dense in $H$, we have that $\{\exp (tv_x) \mid t \in \R\}$ is also dense in $H_x^0$. By definition of $v$ and $v_x$, the closure of $\exp (\mathfrak{h}_x + \g')$ is $H$. Therefore the critical point $x$ of $h_v$ is a common critical point of $h_{v_1},\dots, h_{v_k}$. In particular, $h_v$ takes the minimum value in a common critical point of $h_{v_1},\dots, h_{v_k}$. It turns out that the linear form $\alpha := \sum_{i=1}^k a_i e_i^*$ restricted to $h(M)$ takes the minimum value at one of $c_j$'s. Therefore
		\begin{equation}\Label{eq:hM}
			h(M) \subseteq \bigcap_{(a_1,\dots, a_k) \in A} \{ y = (y_1,\dots, y_k)\in \R^k \mid \langle \alpha, y\rangle \geq \min ( \langle \alpha, c_j\rangle \mid j=1,\dots, N ) \},  
		\end{equation}
		where 
		\begin{equation*}
			A := \{ (a_1,\dots, a_k) \mid \{\exp(tv) \mid t \in \R\} \text{ is dense in $H$} \}. 
		\end{equation*}
		Since $A$ is dense in $\R^k$, the right hand side of \eqref{eq:hM} is the convex hull of $c_j$'s. It follows from $(B_k)$ and $c_j \in h(M)$ for all $j$ that $h(M)$ is the convex full of $c_j$'s, proving the theorem. 
	\end{proof}

\section{Holomorphic foliations from torus actions}\Label{sec:holofoli}
	Let $M$ be a complex manifold and let $G$ be a compact torus acting on $M$ as holomorphic transformations. In this section, we define a subspace $\g_J$ of $\g$ which acts on $M$ local freely by using the complex structure $J$ on $M$ and the action of $G$. We begin with the following lemmas.
	\begin{lemm}\Label{lemm:eqholochart}
		Let $M$ be a  complex manifold equipped with an action of a compact torus $G$ which acts as holomorphic transformations. For $x \in M$, there exists $G_x$-invariant open neighborhoods $U$ at $x \in M$ and $V$ at $0 \in T_xM$ such that $U$ and $V$ are $G_x$-equivariantly biholomorphic. 
	\end{lemm}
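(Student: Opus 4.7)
The plan is to prove this via Bochner's linearization theorem, obtained by averaging an arbitrary holomorphic chart over the compact isotropy group $G_x$. Since $x$ is fixed by $G_x$, the derivative of the action at $x$ gives a complex linear representation $\rho \co G_x \to \mathrm{GL}_\C(T_xM)$, defined by $\rho(g) = (dg)_x$. This is the linear $G_x$-action on $T_xM$ that will serve as our model.

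First, I would pick any holomorphic chart $\varphi \co W \to \varphi(W) \subseteq T_xM$ with $\varphi(x) = 0$ and, after composing with $((d\varphi)_x)^{-1}$, arrange that $(d\varphi)_x = \id_{T_xM}$. Next, shrink the domain to a $G_x$-invariant open neighborhood $W' := \bigcap_{g \in G_x} g \cdot W$ of $x$; this is open because $G_x$ is compact. Define the averaged map
\begin{equation*}
	\tilde{\varphi}(y) := \int_{G_x} \rho(g)^{-1} \cdot \varphi(g \cdot y) \, dg,
\end{equation*}
where $dg$ is the normalized Haar measure on $G_x$. Since each $g$ acts holomorphically and $\rho(g)^{-1}$ is $\C$-linear, the integrand is holomorphic in $y$ for each $g$, and $\tilde{\varphi}$ is holomorphic.

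The two things to verify are equivariance and that $\tilde{\varphi}$ is a local biholomorphism at $x$. For equivariance, given $h \in G_x$, the substitution $g' = gh$ and invariance of Haar measure yield
\begin{equation*}
	\tilde{\varphi}(h \cdot y) = \int_{G_x} \rho(g)^{-1} \varphi(gh \cdot y) \, dg = \rho(h) \int_{G_x} \rho(g')^{-1} \varphi(g' \cdot y) \, dg' = \rho(h) \tilde{\varphi}(y),
\end{equation*}
so $\tilde{\varphi}$ intertwines the $G_x$-action on $M$ with the linear action $\rho$ on $T_xM$. For the derivative at $x$, the chain rule combined with $(d\varphi)_x = \id$ gives
\begin{equation*}
	(d\tilde{\varphi})_x = \int_{G_x} \rho(g)^{-1} \circ (d\varphi)_x \circ \rho(g) \, dg = \int_{G_x} \id_{T_xM} \, dg = \id_{T_xM}.
\end{equation*}

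By the inverse function theorem, $\tilde{\varphi}$ is a biholomorphism on some open neighborhood of $x$, which I then shrink once more to a $G_x$-invariant open neighborhood $U \subseteq W'$ (again using compactness of $G_x$); setting $V := \tilde{\varphi}(U)$, which is automatically $G_x$-invariant by equivariance, yields the required $G_x$-equivariant biholomorphism. There is no serious obstacle here; the only delicate point is bookkeeping the two successive passages to $G_x$-invariant open sets, and making sure that the initial normalization $(d\varphi)_x = \id$ is preserved when we average, so that the derivative of $\tilde{\varphi}$ at $x$ is invertible.
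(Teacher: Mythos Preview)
Your proof is correct and follows essentially the same approach as the paper: Bochner linearization by averaging a holomorphic chart over the compact isotropy group $G_x$, followed by the inverse function theorem and a final shrinking to a $G_x$-invariant neighborhood. The paper's averaging formula $\int_{G_x} (dg)_x \circ ((d\varphi)_x^{-1}\circ\varphi)\circ g^{-1}\,dg$ becomes yours after the substitution $g \mapsto g^{-1}$ and the normalization $(d\varphi)_x = \id$, and you additionally spell out the equivariance verification that the paper leaves implicit.
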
 
	\begin{proof}
		Let $U_0$ be an open neighborhood at $x$ and let $\varphi \co U_0 \to V_0$ be a local holomorphic coordinate centered at $x$, where $V_0$ is an open subset of $\C^n$. Since $G_x$ is compact, the intersection $\bigcap_{g \in G_x} g(U_0)$ is a $G_x$-invariant open neighborhood at $x$. By restricting the domain of definition, we may assume that $U_0$ is $G_x$-invariant. Through the differential $(d\varphi)_x \co T_xM \to T_0\C^n = \C^n$, we identify $\C^n$ with $T_xM$. Then we have a biholomorphism $(d\varphi)_x^{-1}\circ \varphi \co U_0 \to (d\varphi)_x^{-1}(V_0) \subseteq T_xM$. By averaging on $G_x$, we have a $G_x$-equivariant holomorphic map
		\begin{equation*}
			\varphi' := \int_{g \in G_x} (dg)_x \circ ((d\varphi)_x^{-1} \circ \varphi)\circ g^{-1} dg \co U_0 \to T_xM. 
		\end{equation*}
		$\varphi'$ is no longer injective, but, $(d\varphi')_x = \id_{T_xM}$. Therefore, it follows from the implicit function theorem that there exists an open subset $U$ of $U_0$ such that $\varphi'|_{U} \co U \to \varphi'(U)$ is biholomorphic. As before, we may assume that $U$ is $G_x$-invariant and then $V :=\varphi'(U)$ is also $G_x$-invariant. Therefore there exists a $G_x$-equivariant biholomorphism $\varphi' \co U \to V \subseteq T_xM$, proving the lemma. 
	\end{proof}
	\begin{lemm}\Label{lemm:ap}
		Let $M$ be a connected complex manifold with the complex structure $J$. Let $X$ be a nonzero almost periodic vector field on $M$ whose flows preserve $J$. If $X$ vanishes at a point $x \in M$, then $JX$ is not almost periodic. 
	\end{lemm}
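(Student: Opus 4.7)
The plan is to argue by contradiction: assume that $JX$ is also almost periodic. The first task is to exhibit a single compact abelian group containing the one-parameter groups of both $X$ and $JX$. Since $\mathcal{L}_X J = 0$ is equivalent to $[X, JY] = J[X, Y]$ for every $Y$, specializing to $Y = X$ yields $[X, JX] = J[X,X] = 0$, so the flows of $X$ and $JX$ commute. Their closures in the group of holomorphic automorphisms of $M$ are commuting compact tori, and I let $K$ be the compact abelian Lie group they generate. Then $K$ acts holomorphically on $M$, fixes $x$, and both $X$ and $JX$ are fundamental vector fields of the $K$-action.

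Next I would linearize near $x$. Applying Lemma \ref{lemm:eqholochart} to $K$ yields $K$-invariant open neighborhoods $U$ of $x$ in $M$ and $V$ of $0$ in $T_xM$ together with a $K$-equivariant biholomorphism between them. Via this linearization, the $K$-action near $x$ is modeled by a $\C$-linear representation on $(T_xM, J_x)$, which I decompose into weight complex lines $T_xM = \bigoplus_{k=1}^n L_k$, with $K$ acting on each $L_k$ through a character $\alpha_k \in \Hom(K, S^1)$.

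The crux is a rotational-versus-radial dichotomy on each $L_k$. Picking $J_x$-compatible real coordinates $(u_k, v_k)$ on $L_k$, the restriction to $L_k$ of every fundamental vector field $X_w$ of the $K$-action has the purely rotational form $2\pi \langle d\alpha_k, w\rangle (u_k \partial_{v_k} - v_k \partial_{u_k})$. This shape therefore applies both to $X$ and to $JX$. On the other hand, applying $J$ directly to the rotational expression for $X|_{L_k}$ produces the radial field $-2\pi \langle d\alpha_k, X\rangle (u_k \partial_{u_k} + v_k \partial_{v_k})$. A rotational and a radial vector field on $\R^2$ agree only when both vanish (evaluate both at $(1,0)$), which forces $\langle d\alpha_k, X\rangle = 0$ for every $k$. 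Consequently $X$ vanishes identically on the neighborhood $U$.

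To globalize the vanishing, I would invoke that $Z := X - i JX$ is a holomorphic section of $T^{1,0}M$, this being the complex reformulation of $\mathcal{L}_X J = 0$. Since $Z|_U = 0$ and $M$ is connected, the identity theorem for holomorphic sections forces $Z \equiv 0$, hence $X \equiv 0$ on $M$, contradicting the hypothesis that $X$ is nonzero. The main subtlety is the first step: one must place $X$ and $JX$ inside a \emph{common} compact abelian Lie algebra, so that after equivariant linearization both can be simultaneously expanded as fundamental vector fields against the same weight decomposition of $T_xM$. Without this simultaneous decomposition, the decisive rotational-versus-radial comparison on each $L_k$ cannot be set up.
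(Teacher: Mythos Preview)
Your argument is correct and follows essentially the same route as the paper: reduce to a common compact torus via $[X,JX]=0$, linearize equivariantly at the fixed point $x$ using Lemma~\ref{lemm:eqholochart}, and then exploit that every fundamental vector field is purely rotational in the weight coordinates while $J$ applied to a rotational field is radial, forcing all weight pairings $\langle d\alpha_k, v\rangle$ to vanish. The only difference is the endgame: the paper arranges the torus action to be \emph{effective}, so that the $d\alpha_i$ span $\g^*$ and $\langle d\alpha_i,v\rangle=0$ for all $i$ immediately gives $v=0$; you instead conclude $X|_U=0$ and globalize via the identity theorem for the holomorphic vector field $X-iJX$. Both closures are valid; the paper's is slightly more direct, while yours sidesteps the need to pass to an effective quotient.
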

	\begin{proof}
		Assume that $JX$ is almost periodic. Since $X$ is an infinitesimal automorphism of $J$, $[X, JX]=0$. Therefore we may assume that a compact torus $G$ acts on $M$ effectively and  as holomorphic transformations and there exist $v, v' \in \mathfrak{g}$ such that $X = X_v$, $JX = X_{v'}$ and the subgroup $\{ \exp (sv) \exp (tv') \mid s, t\in \R\}$ is dense in $G$. Since $X_x = (JX)_x =0$, $x$ is a $G$-fixed point. Let $\alpha_1,\dots, \alpha_n \in \Hom (G, S^1)$ be the weights of the $G$-representation $T_xM$. By Lemma \ref{lemm:eqholochart}, there exists an equivariant biholomorphic map $U \to V \subseteq T_xM$, where $U$ is an  open neighborhood at $x$. Combining with the decomposition of $T_xM$ into $1$-dimensional representations of weights $\alpha_1,\dots, \alpha_n$, we have a local coordinate $(z_1,\dots, z_n) \co U \to \C^n$ such that $z_i(g\cdot p) = \alpha_i(g)z_i(p)$ for $p \in U$. Let $x_i$ and $y_i$ denote the real and imaginary part of $z_i$, respectively. Then, through the local coordinate $(z_1,\dots, z_n)$ we can represent $X$ and $JX$ as 
		\begin{equation*}
			X = 2\pi \sum _{i=1}^n \langle d \alpha_i,v\rangle\left( -y_i\frac{\partial}{\partial x_i} + x_i\frac{\partial}{\partial y_i}\right) 
		\end{equation*}
		and 
		\begin{equation*}
			JX = 2\pi \sum _{i=1}^n \langle d \alpha_i,  v' \rangle\left( -y_i\frac{\partial}{\partial x_i} + x_i\frac{\partial}{\partial y_i}\right). 
		\end{equation*}
		On the other hand, $J$ is represented as 
		\begin{equation*}
			J = \sum_{i=1}^n \left( \frac{\partial}{\partial y_i} \otimes d x_i - \frac{\partial}{\partial x_i}\otimes d y_i\right). 
		\end{equation*}
		Therefore 
		\begin{equation*}
			\begin{split}
				0 &= X + J^2X\\
				 &= 2\pi \sum_{i=1}^n \left( \langle d \alpha_i, v\rangle \left( -y_i\frac{\partial}{\partial x_i} + x_i\frac{\partial}{\partial y_i}\right) + \langle d \alpha_i, v' \rangle \left(-x_i\frac{\partial}{\partial x_i}-y_i\frac{\partial}{\partial y_i} \right) \right)\\
				 &= 2\pi \sum_{i=1}^n \left(  \langle d\alpha_i, -y_iv-x_iv'\rangle \frac{\partial}{\partial x_i} +\langle d\alpha_i, x_iv-y_iv' \rangle  \frac{\partial}{\partial y_i}\right). 
			\end{split}
		\end{equation*}
		Therefore, by substituting $\epsilon$, $0<|\epsilon |<<1$ for $x_i$ and $y_i$, we have that 
		\begin{equation*}
				0 = \langle d\alpha_i, -\epsilon v-\epsilon v'\rangle  = -\epsilon \langle d\alpha_i, v + v'\rangle 
		\end{equation*}
		and 
		\begin{equation*}
			0 = \langle d\alpha_i, \epsilon v-\epsilon v'\rangle = \epsilon \langle d\alpha_i, v - v'\rangle 
		\end{equation*}
		for all $i=1,\dots, n$. Thus we have $\langle d\alpha_i, v\rangle = 0$ for all $i =1,\dots, n$. 
		Since the action of $G$ on $M$ is effective and $M$ is connected, $d \alpha_i \in \mathfrak{g}^*$ for $i=1,\dots, n$ spans $\mathfrak{g}^*$. This together with the fact that $\langle d\alpha_i,v\rangle = 0$ for all $i$ shows that $v = 0$. This contradicts the assumption that $X = X_v$ is nonzero and hence $JX$ is not almost periodic, as required. 
	\end{proof}
	\begin{prop}\Label{prop:localfree}
		Let $M$ be a connected complex manifold with the complex structure $J$. Let $G$ be a compact torus acting on $M$ effectively and as holomorphic transformations. Define 
		\begin{equation*}
			\mathfrak{g}_J := \{ v \in \mathfrak{g} \mid \text{there exists $v' \in \mathfrak{g}$ such that $X_v = -JX_{v'}$}\}.
		\end{equation*}
		Then, 
		\begin{enumerate}
			\item $\mathfrak{g}_J$ is a Lie subalgebra of $\mathfrak{g}$. 
			\item $\mathfrak{g}_J$ has the complex structure $J_0$ which satisfies $X_{J_0(v)} = JX_{v}$. 
			\item $\mathfrak{g}_J$ acts on $M$ holomorphically and local freely. 
		\end{enumerate}
	\end{prop}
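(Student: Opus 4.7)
The plan is to establish the three assertions in sequence, exploiting three structural features: that $G$ is abelian (so $\mathfrak{g}$ has trivial bracket), that $G$ acts by holomorphic transformations (so $\mathcal{L}_{X_v}J = 0$ for every $v \in \mathfrak{g}$), and that effectiveness of the $G$-action makes the assignment $v \mapsto X_v$ injective from $\mathfrak{g}$ into the space of vector fields on $M$ (otherwise $\exp(tv)$ would act trivially for all $t$, contradicting effectiveness).

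For (1), I only need to verify that $\mathfrak{g}_J$ is an $\R$-linear subspace, since an abelian Lie algebra has no nontrivial bracket condition. This is immediate from $\R$-linearity of $v \mapsto X_v$ and of the pointwise action of $J$: if $X_{v_i} = -JX_{v_i'}$ for $i=1,2$, then $X_{av_1+bv_2} = -JX_{av_1'+bv_2'}$.

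For (2), I will define $J_0 \co \mathfrak{g}_J \to \mathfrak{g}$ by letting $J_0(v)$ be the unique $v' \in \mathfrak{g}$ with $X_v = -JX_{v'}$; uniqueness comes for free from the injectivity above. I then have to check that $J_0(v)$ actually lies in $\mathfrak{g}_J$ and that $J_0^2 = -\id$. Both points follow by applying $J$ to the defining identity $X_v = -JX_{J_0(v)}$, which yields $X_{J_0(v)} = JX_v = -JX_{-v}$; this simultaneously places $J_0(v)$ inside $\mathfrak{g}_J$ (with $-v$ as its witness) and, after one more use of injectivity, gives $J_0^2(v) = -v$. The $\R$-linearity of $J_0$ is again transparent from the linearity of $v \mapsto X_v$ and of $J$.

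For (3), holomorphicity of the $\mathfrak{g}_J$-action is inherited from the $G$-action since $\mathfrak{g}_J \subseteq \mathfrak{g}$. The main obstacle, and the one genuine piece of content in the proposition, is local freeness, and this is where Lemma \ref{lemm:ap} enters. Suppose $v \in \mathfrak{g}_J$ with $(X_v)_x = 0$ for some $x \in M$. Then $X_{J_0(v)} = JX_v$ also vanishes at $x$, and it is almost periodic as the fundamental vector field of the $G$-action generated by $J_0(v) \in \mathfrak{g}$. On the other hand, $X_v$ is itself almost periodic and its flow preserves $J$; so if $X_v$ were nonzero, Lemma \ref{lemm:ap} would force $JX_v$ \emph{not} to be almost periodic, a contradiction. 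Hence $X_v \equiv 0$ on $M$, and effectiveness of the $G$-action then forces $v = 0$, completing the proof of local freeness.
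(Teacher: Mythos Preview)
Your proof is correct and follows essentially the same route as the paper: Part (1) via commutativity of $G$, Part (2) via effectiveness (injectivity of $v \mapsto X_v$) to define $J_0$ and then the identity $X_{J_0(v)} = JX_v = -JX_{-v}$ to show $J_0$ lands in $\mathfrak{g}_J$ with $J_0^2 = -\id$, and Part (3) by contraposition against Lemma~\ref{lemm:ap}. The paper's own proof is terser but uses exactly these ingredients; your added detail (in particular the explicit verification that $J_0(v) \in \mathfrak{g}_J$) only makes the argument cleaner.
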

	\begin{proof}
		Part (1) follows from the fact that $G$ is commutative. For Part (2), let $v \in \mathfrak{g}_J$. Assume that $v', v'' \in \mathfrak{g}_J$ satisfy that $X_v = -JX_{v'} = -JX_{v''}$. Then, $X_{v'} = X_{v''}$. It follows from the effectiveness of the $G$-action that $v' = v''$. Therefore for $v \in \mathfrak{g}_J$, there exists unique $J_0(v) \in \mathfrak{g}_J$ such that $X_{J_0(v)} = JX_{v}$. The map $J_0 \co \mathfrak{g}_J \to \mathfrak{g}_J$ is linear and $J_0^2 = -1$, proving Part (2). Part (3) follows from Part (2) and Lemma \ref{lemm:ap}. The proposition is proved. 
	\end{proof}
	Let $M$ be a connected complex manifold with the complex structure $J$ and let a compact torus $G$ act on $M$ effectively and as holomorphic transformations. By Proposition \ref{prop:localfree}, $\mathfrak{g}_J$ acts on $M$ holomorphically and local freely. Therefore we have a holomorphic foliation $\F_{\mathfrak{g}_J}$ whose leaves are $\mathfrak{g}_J$-orbits. 
\section{Torus invariant transverse K\"{a}hler foliations}\Label{sec:invtransverse}
	
	A transverse K\"{a}hler form is a special kind of transverse symplectic form. Let $M$ be a complex manifold with the complex structure $J$.  Let $\F$ be a holomorphic foliation on $M$. A real $2$-form $\omega$ on $M$ is called \emph{transverse K\"{a}hler} with respect to $\F$ if the following conditions are satisfied: 
	\begin{enumerate}
		\item $\omega$ is transverse symplectic with respect to $\F$. 
		\item $\omega$ is of type $(1,1)$. Namely, For $Y_x,Z_x \in T_xM$, $\omega_x(JY_x,JZ_x) = \omega_x(Y_x,Z_x)$.
		\item $\omega$ is positive. Namely, $\omega_x(Y_x, JY_x) \geq 0$ for all $Y_x \in T_xM$. 
	\end{enumerate}
	The conditions (1) and (3) imply that $\omega_x(Y_x, JY_x)= 0$ if and only if $Y_x \in T_x\F$. 
	For a holomorphic foliation $\F$ on $M$, if a transverse K\"{a}hler form $\omega$ exists, we say that $\F$ is \emph{transverse K\"{a}hler}. 
	\begin{prop}\Label{prop:average}
		Let $M$ be a complex manifold with the complex structure $J$. Let $G$ be a compact torus acting on $M$ as holomorphic transformations. Let $\F$ be a $G$-invariant foliation and let $\omega$ be a transverse K\"{a}hler form with respect to $\F$. Then, 
		\begin{equation*}
			\int _{g \in G} g^*\omega dg
		\end{equation*}
		is a transverse K\"{a}hler form with respect to $\F$ and invariant under the $G$-action on $M$. 
	\end{prop}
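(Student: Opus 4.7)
The plan is to verify, one by one, the five properties required of $\omega^G := \int_{g \in G} g^*\omega \, dg$: closedness, $G$-invariance, type $(1,1)$, positivity, and the kernel identity $\ker \omega^G = T\F$. Four of these follow by standard averaging, and the last will be the only substantive point.

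First, closedness and $G$-invariance: since the exterior derivative commutes with pullbacks and with integration over $G$, one has $d\omega^G = \int_G g^*(d\omega)\,dg = 0$; and for any $h \in G$, the right-invariance of Haar measure gives
\begin{equation*}
	h^*\omega^G = \int_G (gh)^*\omega \, dg = \omega^G.
\end{equation*}
Next, type $(1,1)$: because each $g \in G$ acts holomorphically, $dg$ commutes with $J$, so $(g^*\omega)_x(JY_x, JZ_x) = \omega_{g\cdot x}(J\,dg\cdot Y_x, J\,dg\cdot Z_x) = \omega_{g\cdot x}(dg\cdot Y_x, dg\cdot Z_x) = (g^*\omega)_x(Y_x, Z_x)$, and integration preserves this identity. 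The same computation with $Z_x = Y_x$ together with positivity of $\omega$ yields $(g^*\omega)_x(Y_x, JY_x) \geq 0$ for every $g$, so $\omega^G$ is positive.

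The main obstacle — really the only one — is the kernel identity, since one might fear that averaging could enlarge the kernel. The inclusion $T\F \subseteq \ker \omega^G$ is immediate: $G$-invariance of $\F$ means $dg$ maps $T_x\F$ into $T_{g\cdot x}\F$, which lies in $\ker \omega_{g\cdot x}$ because $\omega$ is transverse symplectic; hence $(g^*\omega)_x$ vanishes on $T_x\F$ for every $g$. For the reverse inclusion, let $Y_x \notin T_x\F$. By the transverse Kähler property of $\omega$ recorded right before the proposition, $\omega_x(Y_x, JY_x) > 0$. Since the function $g \mapsto (g^*\omega)_x(Y_x, JY_x)$ is continuous, nonnegative on all of $G$, and strictly positive at $g = e$, it is strictly positive on a neighborhood of the identity, and therefore
\begin{equation*}
	\omega^G_x(Y_x, JY_x) = \int_G (g^*\omega)_x(Y_x, JY_x)\, dg > 0,
\end{equation*}
so $Y_x \notin \ker \omega^G_x$. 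This gives $\ker \omega^G = T\F$ and completes the verification that $\omega^G$ is a $G$-invariant transverse Kähler form.
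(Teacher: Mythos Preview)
Your proof is correct and follows essentially the same approach as the paper's. The only minor variation is in the reverse kernel inclusion: the paper uses $G$-invariance of $\F$ to conclude that the integrand $(g^*\omega)_x(Y_x,JY_x)$ is strictly positive for \emph{every} $g$ when $Y_x \notin T_x\F$, whereas you argue more economically from positivity at $g=e$ and continuity---both reach the same conclusion.
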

	\begin{proof}
		For short, denote 
		\begin{equation*}
			\omega' = \int _{g \in G} g^*\omega dg. 
		\end{equation*}
		Since $\omega$ is closed, so is $\omega'$. Since $G$ acts on $M$ preserving the complex structure $J$, $\omega'$ is a positive $(1,1)$-form. It remains to show that $\ker \omega'_x = T_x\F$ for all $x \in M$. By definition, for $Y_x \in T_xM$, 
		\begin{equation*}
			\begin{split}
				\omega'_x (Y_x, JY_x) &= \int _{g \in G} (g^*\omega)_x (Y_x, JY_x) dg\\
				&= \int _{g \in G} \omega_{g\cdot x} ((dg)_x(Y_x) , (d g)_x (JY_x))d g \\
				&= \int _{g \in G} \omega_{g\cdot x} ((d g)_x(Y_x) , J((d g)_x (Y_x)))d g
			\end{split}
		\end{equation*}
		because $J$ is $G$-invariant. Since $\omega_{g\cdot x} ((d g)_x(Y_x) , J((d g)_x (Y_x))) \geq 0$ and the equality holds if and only if $(d g)_x(Y_x) \in T_{g\cdot x}\F$, it follows from the $G$-invariance of $\F$ that $\omega'_x (Y_x,JY_x) = 0$ if and only if $Y_x \in T_x\F$, proving the proposition. 
	\end{proof}
	Thanks to Proposition \ref{prop:average}, if a $G$-invariant foliation $\F$ is transverse K\"{a}hler, we may always assume that the transverse K\"{a}hler form with respect to $\F$ is $G$-invariant without loss of generality. 

	For foliations $\F_1$ and $\F_2$ on a smooth manifold $M$, we denote by $\F_1 \subseteq \F_2$ if $T\F_1 \subseteq T\F_2$. Our next purpose is to  give a lower bound of $G$-invariant transverse K\"{a}hler foliations that admit moment maps. 
	\begin{prop}\Label{prop:lowerbound}
		Let $M$ be a connected complex manifold with the complex structure $J$. Let a compact torus $G$ act on $M$ effectively and as holomorphic transformations. Let $\F$ be a $G$-invariant holomophic foliation and let $\omega$ be a $G$-invariant transverse K\"{a}hler form with respect to $\F$. If there exists a moment map with respect to $\omega$, then $\F_{\mathfrak{g}_J} \subseteq \F$. 
	\end{prop}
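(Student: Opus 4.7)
The plan is to show that for each $v \in \g_J$ the fundamental vector field $X_v$ is tangent to $\F$; since $\ker\omega = T\F$ and $dh_v = -\iota_{X_v}\omega$, this means producing a well-chosen direction along which $dh_v$ obviously vanishes and then squeezing it by positivity. The two key ingredients are the moment map equation and the defining positivity of a transverse K\"ahler form: $\omega(Y, JY) \geq 0$ with equality iff $Y \in T\F$.

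The first step is to verify that $h_v$ is $G$-invariant for every $v \in \g$. Since $G$ is abelian, $X_v$ is $G$-invariant, and $\omega$ is $G$-invariant by hypothesis, so $dh_v = -\iota_{X_v}\omega$ is $G$-invariant. On the connected manifold $M$ this forces $g^{\ast} h_v - h_v$ to be a constant $c(g)$, and $g\mapsto c(g)$ is a continuous homomorphism $G \to \R$, which must be trivial because $G$ is compact. So $h_v$ is $G$-invariant, and in particular its derivative along any fundamental vector field $X_{v'}$ is zero.

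Now fix $v \in \g_J$ and set $v' := J_0(v) \in \g_J$ as in Proposition \ref{prop:localfree}(2), so that $X_v = -JX_{v'}$. The central computation is
\[
0 \;=\; X_{v'}(h_v) \;=\; -\omega(X_v, X_{v'}) \;=\; -\omega(-JX_{v'}, X_{v'}) \;=\; \omega(JX_{v'}, X_{v'}) \;=\; -\omega(X_{v'}, JX_{v'}),
\]
where the first equality is the $G$-invariance of $h_v$, the second is the moment map identity, and the rest is linear algebra together with skew-symmetry. Positivity of $\omega$ then forces $X_{v'} \in T\F$. Because $\F$ is a holomorphic foliation, $T\F$ is $J$-stable, so $JX_{v'} = -X_v \in T\F$, i.e., $X_v \in T\F$. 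This holds for every $v \in \g_J$, hence $T\F_{\g_J} \subseteq T\F$.

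The step I expect to require the most care is the $G$-invariance of $h_v$, since it is precisely what converts the abstract existence of a moment map into the concrete identity $X_{v'}(h_v) = 0$ that feeds into positivity; here the compactness of $G$ is essential, as it rules out any additive character correcting each $h_v$. Once that is in place, the rest of the argument is essentially forced by the definition of $\g_J$ and the positivity of $\omega$.
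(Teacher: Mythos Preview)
Your proof is correct and follows essentially the same approach as the paper: $G$-invariance of $h_v$ gives $\omega(X_{v_1},X_{v_2})=0$ for all $v_1,v_2\in\g$, and then the relation $JX_v=X_{J_0(v)}$ combined with positivity forces the fundamental vector fields of $\g_J$ into $T\F$. The only cosmetic difference is that the paper applies positivity directly to $\omega(X_v,JX_v)=\omega(X_v,X_{J_0(v)})=0$, so your final appeal to $J$-stability of $T\F$ is unnecessary (indeed redundant, since $J_0$ is a bijection on $\g_J$ and you have already shown $X_{v'}\in T\F$ for every $v'\in\g_J$).
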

	\begin{proof}
		Let $\Phi \co M \to \g^*$ be a moment map. We denote by $h_v$ the smooth function given by $h_v(x) = \langle \Phi (x), v\rangle$ for $v \in \g$ and $x \in M$. Since $h_v$ is $G$-invariant for $v \in \g$, we have that 
		\begin{equation}\Label{eq:isotropic}
			0 = \mathcal{L}_{X_{v_1}}h_{v_2} = \iota_{X_{v_1}}dh_{v_2} = -\iota_{X_{v_1}}\iota_{X_{v_2}}\omega = 2\omega(X_{v_1}, X_{v_2})
		\end{equation}
		 for any $v_1,v_2 \in \g$. Assume that $v \in \g_J$. Then, 
		 \begin{equation*}
		 	0 \leq \omega (X_v, JX_v) = \omega (X_v, X_{J_0(v)}) = 0
		 \end{equation*}
		 by \eqref{eq:isotropic}. Therefore $(X_v)_x \in T_x\F$ for all $x$ and hence $\F_{\g_J} \subseteq \F$, as required. 
	\end{proof}
	\begin{theo}\Label{theo:liftedmoment}
		Let $M$, $J$, $G$, $\F$, $\omega$ be as Proposition \ref{prop:lowerbound}. Let $q \co \mathfrak{g} \to \mathfrak{g}/\mathfrak{g}_J$ be the quotient map. Assume that there exists a moment map $\Phi \co M \to \mathfrak{g}^*$ with respect to $\omega$. Then, there exist $c \in \mathfrak{g}^*$ and a smooth map $\widetilde{\Phi} \co M \to (\mathfrak{g}/\mathfrak{g}_J)^*$ such that $\Phi + c = q^* \circ \widetilde{\Phi}$. 
	\end{theo}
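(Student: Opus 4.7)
The plan is to exhibit $c$ such that $\Phi + c$ takes values in the annihilator of $\g_J$ in $\g^*$, which is precisely the image of $q^*$. First I would unpack what the conclusion really asks. The dual of the quotient map is an injection $q^*\co (\g/\g_J)^* \hookrightarrow \g^*$ whose image is $\mathrm{Ann}(\g_J) = \{\xi \in \g^* \mid \langle \xi, v\rangle = 0 \text{ for all } v \in \g_J\}$. Hence producing $\widetilde\Phi$ with $q^* \circ \widetilde\Phi = \Phi + c$ is equivalent to finding $c$ so that $\langle \Phi(x) + c, v\rangle = 0$ for every $x \in M$ and every $v \in \g_J$.

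Next, the key observation is that the functions $h_v$ for $v \in \g_J$ are constant. Indeed, by Proposition \ref{prop:lowerbound} we have $\F_{\g_J} \subseteq \F$, so for every $v \in \g_J$ the fundamental vector field $X_v$ lies in $T\F_{\g_J} \subseteq T\F = \ker \omega$ pointwise. Therefore $\iota_{X_v}\omega = 0$, and the defining relation $dh_v = -\iota_{X_v}\omega$ gives $dh_v = 0$. Since $M$ is connected, $h_v$ is a constant, which I will denote by $h_v(x_0)$ for a chosen basepoint $x_0 \in M$.

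The map $v \mapsto -h_v(x_0)$ is linear on $\g_J$ (linearity in $v$ is already built into $\Phi$), so by the Hahn--Banach extension (or simply picking a linear complement of $\g_J$ in $\g$) I can choose $c \in \g^*$ with $\langle c, v\rangle = -h_v(x_0)$ for all $v \in \g_J$. Then for every $x \in M$ and $v \in \g_J$,
\begin{equation*}
	\langle \Phi(x) + c, v\rangle = h_v(x) + \langle c, v\rangle = h_v(x_0) - h_v(x_0) = 0,
\end{equation*}
so $\Phi(x) + c \in \mathrm{Ann}(\g_J) = q^*((\g/\g_J)^*)$ for all $x$. Because $q^*$ is a linear embedding onto a closed subspace, there is a unique $\widetilde\Phi(x) \in (\g/\g_J)^*$ with $q^*\widetilde\Phi(x) = \Phi(x)+c$, and $\widetilde\Phi$ inherits smoothness from $\Phi$ via the inverse of the linear isomorphism $q^* \co (\g/\g_J)^* \to \mathrm{Ann}(\g_J)$.

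There is essentially no hard step here; the only point that deserves care is the vanishing $\iota_{X_v}\omega = 0$ for $v \in \g_J$, which uses both the inclusion $\F_{\g_J} \subseteq \F$ from Proposition \ref{prop:lowerbound} and the transverse symplectic condition $\ker \omega_x = T_x\F$. Everything else is linear algebra together with the connectedness of $M$.
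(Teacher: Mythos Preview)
Your argument is correct and follows essentially the same route as the paper: show that $h_v$ is constant for $v\in\g_J$ via Proposition~\ref{prop:lowerbound}, then translate $\Phi$ by a suitable $c\in\g^*$ so that its values lie in the image of $q^*$. The paper phrases the linear-algebra step using the exact sequence $0\to\g_J\xrightarrow{i}\g\xrightarrow{q}\g/\g_J\to 0$ and the surjectivity of $i^*$, while you speak of the annihilator $\mathrm{Ann}(\g_J)=\operatorname{im}q^*$ and extend the linear functional $v\mapsto -h_v(x_0)$ from $\g_J$ to $\g$; these are the same argument in different language.
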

	\begin{proof}
		For $v \in \g$, $h_v$ denotes the smooth function given by $h_v(x) = \langle \Phi(x), v\rangle$. It follows from Proposition \ref{prop:lowerbound} that $dh_v =-\iota_{X_v}\omega = 0$ for $v \in \g_J$. It turns out that $h_v$ is constant on $M$. Let $i \co \g_J \to \g$ denote the inclusion. Then, there exists $\overline{c} \in \mathfrak{g}_J^*$ such that $i^*\circ \Phi (x)= \overline{c}$ for any $x \in M$. The sequences
		\begin{equation*}
			\xymatrix{
				0 \ar[r] & \mathfrak{g}_J \ar[r]^i & \mathfrak{g} \ar[r]^q & \mathfrak{g}/\mathfrak{g}_J \ar[r] & 0
			}
		\end{equation*}
		and 
		\begin{equation*}
			\xymatrix{
				0  & \mathfrak{g}_J^* \ar[l] & \mathfrak{g}^*  \ar[l]^{i^*} & (\mathfrak{g}/\mathfrak{g}_J)^*  \ar[l]^{q^*}& 0 \ar[l]
			}
		\end{equation*}
		are exact. Since $i^*$ is surjective, there exists $c \in \mathfrak{g}^*$ such that $i^* (c) = \overline{c}$. In particular, $i^* (\Phi (x) - c) = 0$ for all $x \in M$. Therefore, there uniquely exists $\widetilde{\Phi}(x) \in (\mathfrak{g}/\mathfrak{g}_J)^*$ such that $q^*(\widetilde{\Phi} (x)) = \Phi (x)-c$ for all $x \in M$. The smoothness is obvious. The theorem is proved. 
	\end{proof}
	We call $\widetilde{\Phi} \co M \to (\g/\g_J)^*$ a \emph{lifted moment map}. As a corollary of Theorems \ref{theo:convexity} and \ref{theo:liftedmoment}, we have the following. 
	\begin{coro}\Label{coro:liftedconvexity}
		Let $M$ be a compact connected complex manifold. Let a compact torus $G$ act on $M$ effectively and preserving the complex structure $J$ on $M$. Assume that $\F_{\g_J}$ is transverse K\"{a}hler and there exists a moment map $\Phi \co M \to \g^*$ with respect to a $G$-invariant transverse K\"{a}hler form. Then, the image of $M$ by a lifted moment map $\widetilde{\Phi} \co M \to (\g/\g_J)^*$ is a convex polytope in $(\g/\g_J)^*$. 
	\end{coro}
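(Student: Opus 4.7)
The plan is to combine the two preceding theorems directly, so the proof will be a short deduction. First I would invoke Theorem \ref{theo:liftedmoment} under the standing hypotheses (there is a $G$-invariant transverse K\"ahler form $\omega$ with respect to $\F_{\g_J}$, which is in particular transverse symplectic, and the assumed moment map $\Phi$ satisfies its hypothesis). This gives a constant $c \in \g^*$ and a smooth lift $\widetilde{\Phi} \co M \to (\g/\g_J)^*$ with $\Phi + c = q^* \circ \widetilde{\Phi}$. The key structural point is that $q \co \g \to \g/\g_J$ is surjective, so $q^*$ is an injective linear embedding; consequently the affine map $\xi \mapsto q^*(\xi) - c$ embeds $(\g/\g_J)^*$ into $\g^*$ as an affine subspace, and $\widetilde{\Phi}(M)$ is identified under this embedding with the translated set $\Phi(M) + c$.

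Next I would apply Theorem \ref{theo:convexity} with $\g' = \g_J$ (which acts locally freely by Proposition \ref{prop:localfree}), choosing a basis $v_1,\dots,v_k$ of $\g$ with $k = \dim \g$. The component functions $h_{v_i}(x) = \langle \Phi(x), v_i\rangle$ satisfy $dh_{v_i} = -\iota_{X_{v_i}}\omega$ by definition of a moment map, so the hypotheses of Theorem \ref{theo:convexity} are met. Under the isomorphism $\g^* \cong \R^k$ dual to the chosen basis, the map $h = (h_{v_1},\dots,h_{v_k}) \co M \to \R^k$ coincides with $\Phi$. Statement $(C_k)$ then asserts that $\Phi(M) = h(M)$ is the convex hull of the finitely many points $c_1,\dots,c_N$ obtained as images of the connected components $Z_1,\dots,Z_N$ of the common critical set of $h_{v_1},\dots,h_{v_k}$; in particular $\Phi(M)$ is a convex polytope in $\g^*$.

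Finally, pulling this description back through the injective affine map $q^*(\cdot) - c$, the set $\widetilde{\Phi}(M)$ is the convex hull of the finitely many points $\widetilde{\Phi}(Z_j) \in (\g/\g_J)^*$, hence a convex polytope. I do not expect a substantive obstacle: the content is entirely packaged in the two cited theorems, and the only care needed is to note that $\Phi + c$ does land in the image of $q^*$ (precisely the conclusion of Theorem \ref{theo:liftedmoment}) and that injective affine maps carry convex polytopes to convex polytopes.
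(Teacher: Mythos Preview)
Your proposal is correct and follows exactly the route the paper intends: the paper states this result without proof, merely as a corollary of Theorems \ref{theo:convexity} and \ref{theo:liftedmoment}, and your argument is precisely the straightforward deduction one would write out---apply Theorem \ref{theo:liftedmoment} to get the lift $\widetilde{\Phi}$ via the injective affine map $q^*(\cdot)-c$, apply Theorem \ref{theo:convexity} with $\g'=\g_J$ and a basis of $\g$ to see that $\Phi(M)$ is the convex hull of finitely many points, and then transport this through $q^*$. There is nothing to add.
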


\section{The extreme case}\Label{sec:extreme}
	In this section, we consider the extreme case. First we recall the notion of maximal torus action introduced in \cite{Ishida}. Let $M$ be a connected smooth manifold equipped with an effective action of a compact torus $G$. Then, for any point $x$, we have that $\dim G_x + \dim G \leq \dim M$. The $G$-action on $M$ is \emph{maximal} if there exists a point $x \in M$ such that 
	\begin{equation*}
		\dim G + \dim G_x = \dim M. 
	\end{equation*}
	Any compact connected complex manifold $M$ equipped with a maximal action of a compact torus $G$ which preserves the complex structure can be described with a fan $\Delta$ in $\g$ and a complex subspace $\mathfrak{h}$ of $\g^\C$. 
	\begin{theo}[see \cite{Ishida}]\Label{theo:maximal}
		Let $M$ be a compact connected complex manifold $M$ equipped with a maximal action of a compact torus $G$ which preserves the complex structure $J$. Then, there exists a nonsingular fan $\Delta$ in $\g$ and a complex subspace $\mathfrak{h}$ such that $M$ is $G$-equivariantly biholomorphic to $X(\Delta)/H$, where $X(\Delta)$ denotes the toric variety associated with $\Delta$ and $H := \exp (\mathfrak{h}) \subseteq G^\C \curvearrowright X(\Delta)$. 
	\end{theo}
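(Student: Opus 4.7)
The plan is to build the fan $\Delta$ from the local isotropy structure of the action and then realize $M$ as a holomorphic quotient of the associated toric variety using the complexified infinitesimal action furnished by $J$. Since $G$ acts by biholomorphisms, for every $v \in \g$ the field $JX_v$ is holomorphic and commutes with each $X_{v'}$; on the compact manifold $M$ these fields are complete, so they integrate a holomorphic action of the abelian complex Lie algebra $\g^\C$, even though $G^\C$ itself need not act. The subspace $\mathfrak{h} \subseteq \g^\C$ whose fundamental vector fields vanish on the generic $\g^\C$-orbit has complex dimension $\dim G - \dim_\C M$, and the maximality hypothesis guarantees that the $\g^\C$-orbit through a point $x_0$ realizing $\dim G + \dim G_{x_0} = \dim_\R M$ is open and dense; this is checked from the local model supplied by Lemma \ref{lemm:eqholochart}.

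For each $x \in M$, that same lemma gives a $G_x$-equivariant biholomorphism from a neighborhood of $x$ onto an open neighborhood of the origin in $T_xM$, with $G_x$-weights $\alpha_1^x,\dots,\alpha_n^x \in \Hom(G_x, S^1) \subseteq \g_x^*$. The duals of these weights, together with $\g_x$ itself, cut out a rational cone $\sigma_x \subseteq \g$, whose nonsingularity at maximal-isotropy points is immediate from the local model. I would define $\Delta$ as the collection of all such $\sigma_x$ and their faces, then check the fan axioms by comparing the equivariant charts at pairs of points whose $G$-orbit closures intersect. Compactness of $M$ ensures that every face of $\Delta$ is realized by some orbit stratum and that the maximal cones together cover $\g$, so $\Delta$ is complete.

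To identify $M$ with $X(\Delta)/H$ for $H := \exp(\mathfrak{h})$, I would pick a base point in the open $\g^\C$-orbit of $M$ and match it with the identity of the open torus orbit of $X(\Delta)$, extending by $\g^\C$-equivariance to produce a holomorphic $G$-equivariant surjection $X(\Delta) \to M$. Matching the affine toric chart associated to each cone $\sigma_x \in \Delta$ with the local equivariant neighborhood from Lemma \ref{lemm:eqholochart} shows that the map descends to a biholomorphism $X(\Delta)/H \to M$. The principal obstacle is this gluing step: one has to verify that the local equivariant models patch coherently into a single fan and that the partial $G^\C$-orbits extend across every isotropy stratum, even though $G^\C$ itself does not act globally on $M$. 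Executing these compatibility checks is the technical core of the argument carried out in \cite{Ishida}.
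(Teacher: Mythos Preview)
The paper does not prove this theorem; it is quoted from \cite{Ishida}, and the paper only \emph{recalls} the construction of $\Delta$ and $\mathfrak{h}$ in the paragraph following the statement. So there is no proof in the paper to compare your sketch against, only a description of the data. That said, two points of comparison and one genuine error are worth flagging.

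First, the paper's description builds $\Delta$ globally rather than pointwise: it singles out the \emph{characteristic submanifolds} $N_1,\dots,N_k$ (codimension-one components of circle fixed-point sets), attaches to each a primitive one-parameter subgroup $\lambda_i \in \Hom(S^1,G)$, and declares a cone $\pos(\lambda_i \mid i \in I)$ to belong to $\Delta$ exactly when $\bigcap_{i\in I} N_i \neq \emptyset$. Your proposal instead assembles cones $\sigma_x$ from the isotropy weights at each $x$ via Lemma~\ref{lemm:eqholochart}. These viewpoints are compatible, but the characteristic-submanifold formulation makes the fan axioms (in particular that intersections of cones are faces) much easier to verify, since the face lattice of $\Delta$ is visibly the intersection lattice of the $N_i$. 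Likewise, the paper defines $\mathfrak{h}$ directly as the Lie algebra of global stabilizers of the extended $G^\C$-action on $M$; note that the paper asserts this $G^\C$-action \emph{does} exist on $M$ (it is merely non-effective), contrary to your remark that ``$G^\C$ itself does not act globally on $M$.''

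Second, and more seriously, your claim that ``compactness of $M$ ensures \dots\ that the maximal cones together cover $\g$, so $\Delta$ is complete'' is false. The fan $\Delta$ lives in $\g$, where $\dim \g = \dim G$, and $X(\Delta)$ has complex dimension $\dim G$; if $\Delta$ were complete then $X(\Delta)$ would already be compact and there would be nothing for $H$ to do. The correct statement, which the paper records as condition~(2) immediately after the theorem, is that only the \emph{projected} fan $q(\Delta)$ in $\g/p(\mathfrak{h}) = \g/\g_J$ is complete. Compactness of $M$ is equivalent to completeness of $q(\Delta)$, not of $\Delta$ itself. This distinction is essential to the whole paper: Theorem~\ref{theo:polytopal} asks when $q(\Delta)$ is polytopal, a question that would be trivial if $\Delta$ were complete in $\g$.
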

	We shall recall how to deduce $\Delta$ and $\mathfrak{h}$ from $M$ briefly. Each connected component of the set of fixed points of a circle subgroup of $G$ is a closed complex submanifold of $M$. If such a submanifold has complex codimension one, then we call it a \emph{characteristic submanifold} of $M$. The number of characteristic submanifolds is at most finite. Let $N_1,\dots, N_k$ be characteristic submanifolds of $M$. Each characteristic submanifold $N_i$ is fixed by a circle subgroup $G_i$ of $G$ by definition. To each characteristic submanifold $N_i$, we assign a group isomorphism $\lambda_i \co S^1 \to G_i \subseteq G$ such that 
	\begin{equation*}
		(\lambda_i(g))_*(\xi) =g\xi \quad \text{for all $g \in S^1$ and $\xi \in TM|_{N_i}/TN_i$}. 
	\end{equation*}
	We can think of $\lambda \in \Hom (S^1,G)$ as a vector in $\g$ by $d\lambda(1) \in \g$. We have a collection $\Delta$ of cones 
	\begin{equation*}
		\Delta := \left\{ \pos (\lambda_i \mid i \in I) \mid \bigcap_{i \in I} N_i \neq \emptyset\right\},
	\end{equation*}
	where $\pos (\lambda_i \mid i \in I)$ is the cone spanned by $\lambda_i$ for $i \in I$. It has been shown that $\Delta$ is a nonsingular fan in $\g$ with respect to the lattice $\Hom (S^1,G)$. Since the action of $G$ preserves the complex structure $J$ on $M$, it extends to a holomorphic action of $G^\C$ on $M$. Then the complex subspace $\mathfrak{h}$ of $\g^\C = \g \otimes \C = \g \otimes 1 + \g \otimes \sqrt{-1}$ is defined to be the Lie algebra of global stabilizers of the $G^\C$-action on $M$. Namely, 
	\begin{equation*}
		\mathfrak{h} = \{ u\otimes 1 +v \otimes \sqrt{-1} \in \g^\C \mid X_u +JX_v = 0\}. 
	\end{equation*}
	The pair of $\Delta$ and $\mathfrak{h}$ satisfies the followings. 
	\begin{enumerate}
		\item The restriction $p|_\mathfrak{h}$ of the projection $p \co \g^\C \to \g\otimes 1 \cong \g$ is injective. 
		\item The quotient map $q \co \g \to \g/p(\mathfrak{h})$ sends $\Delta$ to a complete fan $q(\Delta)$ in $\g/p(\mathfrak{h})$. 
	\end{enumerate}
	Conversely, if $\Delta$ and $\mathfrak{h}$ satisfy the conditions (1) and (2), then the quotient $X(\Delta)/H$ is a compact connected complex manifold and the action of $G$ on $X(\Delta)$ descends to a maximal action on $X(\Delta)/H$. 
	\begin{prop}\Label{prop:phgJ}
		Let $M$ be a compact connected complex manifold $M$ equipped with an action of a compact torus $G$ which preserves the complex structure $J$. Let $\mathfrak{h}$ be the Lie algebra of global stabilizers of the $G^\C$-action on $M$. Then, $p(\mathfrak{h}) = \g_J$. 
	\end{prop}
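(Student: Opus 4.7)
The plan is to show that $p(\mathfrak{h}) = \g_J$ is essentially a direct unpacking of definitions, given the explicit description of $\mathfrak{h}$ already recorded in the excerpt, namely
\begin{equation*}
	\mathfrak{h} = \{u\otimes 1 + v\otimes \sqrt{-1} \in \g^\C \mid X_u + JX_v = 0\},
\end{equation*}
together with the definition $\g_J = \{w \in \g \mid X_w = -JX_{w'} \text{ for some } w' \in \g\}$ from Section \ref{sec:holofoli}. The projection $p\co \g^\C \to \g$ is given by $p(u\otimes 1 + v \otimes \sqrt{-1}) = u$, so I only need to check the two set-theoretic inclusions.

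For the inclusion $p(\mathfrak{h}) \subseteq \g_J$, take $u \in p(\mathfrak{h})$ and pick a preimage $u\otimes 1 + v\otimes \sqrt{-1} \in \mathfrak{h}$. By the description of $\mathfrak{h}$ this means $X_u + JX_v = 0$, i.e.\ $X_u = -JX_v$, which is precisely the condition for $u$ to lie in $\g_J$. Conversely, given $u \in \g_J$, by the definition of $\g_J$ there is some $v \in \g$ with $X_u = -JX_v$, so $u\otimes 1 + v\otimes \sqrt{-1}$ belongs to $\mathfrak{h}$ and hence $u = p(u\otimes 1 + v\otimes \sqrt{-1}) \in p(\mathfrak{h})$. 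The two inclusions give the equality.

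The only potential obstacle is making sure the explicit expression for $\mathfrak{h}$ quoted above genuinely is the Lie algebra of global stabilizers of the extended $G^\C$-action; this is the standard infinitesimal identification, since the infinitesimal $\g^\C$-action on $M$ is given by $u\otimes 1 + v\otimes \sqrt{-1} \mapsto X_u + JX_v$ (the imaginary part acts by $J$ times the real action, because the $G$-action preserves $J$), so the kernel of this infinitesimal action is exactly the locus where $X_u + JX_v$ vanishes identically on $M$. This step is already implicit in the paragraph preceding the proposition, so the proof reduces to the two-line definitional check above. Note in passing that condition (1) in the characterization of the pair $(\Delta,\mathfrak{h})$ (injectivity of $p|_\mathfrak{h}$) is compatible with this: $p$ restricted to $\mathfrak{h}$ is then a linear isomorphism onto $\g_J$, and the complex structure on $\mathfrak{h}$ transports to exactly the $J_0$ on $\g_J$ produced in Proposition \ref{prop:localfree}.
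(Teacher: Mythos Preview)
Your proof is correct and takes essentially the same approach as the paper: the paper's proof is the single sentence ``This follows from the definitions of $\mathfrak{h}$ and $\g_J$ immediately,'' and your argument is exactly the two-inclusion unpacking of those definitions that justifies this claim.
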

	\begin{proof}
		This follows from the definitions of $\mathfrak{h}$ and $\g_J$ immediately. 
	\end{proof}
	\begin{lemm}\Label{lemm:normal}
		Let $\Delta,\mathfrak{h}, q$ be as above and let $J$ denote the complex structure on $X(\Delta)/H$. Assume that $\F_{\g_J}$ is a transverse K\"{a}hler foliation on $X(\Delta)/H$ and let $\omega$ be a $G$-invariant transverse K\"{a}hler form with respect to $\F_{\g_J}$. In addition, assume that there exists a moment map $\Phi \co X(\Delta)/H \to \g^*$ with respect to $\omega$. Then, the image of $X(\Delta)/H$ by a lifted moment map $\widetilde{\Phi}$ is a convex polytope and $\widetilde{\Phi} (X(\Delta)/H)$ is a normal polytope of $q(\Delta)$. 
	\end{lemm}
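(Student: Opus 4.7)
The plan is to combine Corollary \ref{coro:liftedconvexity}, which already establishes that $\widetilde{\Phi}(M)$ is a convex polytope in $(\g/\g_J)^*$ (writing $M := X(\Delta)/H$), with a careful identification of its vertices and of the normal cone at each. By Theorem \ref{theo:convexity}(C) the polytope $\widetilde{\Phi}(M)$ is the convex hull of the images of the connected components of the common critical set of the Hamiltonians $h_v$, $v \in \g$, so the lemma reduces to (i) indexing these images by the top-dimensional cones of $q(\Delta)$, and (ii) verifying that the normal cone of $\widetilde{\Phi}(M)$ at the vertex indexed by $\tau$ is precisely $\tau$.

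For (i) I would characterize the common critical points intrinsically. A point $x \in M$ is a common critical point of all $h_v$ exactly when $(X_v)_x \in T_x\F_{\g_J}$ for every $v \in \g$; since $\g_J$ acts locally freely, this is equivalent to $\g = \g_x + \g_J$. By Theorem \ref{theo:maximal} and the description of isotropy subalgebras for a maximal torus action on $X(\Delta)/H$, the algebra $\g_x$ is the real linear span of some cone $\sigma_x \in \Delta$. Using Proposition \ref{prop:phgJ} to identify $p(\mathfrak{h})$ with $\g_J$, the condition $\g = \g_x + \g_J$ translates into $q(\sigma_x)$ being a top-dimensional cone $\tau$ of the complete fan $q(\Delta)$. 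Theorem \ref{theo:convexity}(C) then yields $\widetilde{\Phi}(M) = \conv(c_\tau : \tau \text{ top-dimensional in } q(\Delta))$.

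For (ii), fix a point $x$ in the common critical set with $q(\sigma_x) = \tau$, write $\sigma := \sigma_x$, and decompose an arbitrary $v \in \g$ as $v = v_x + v'$ with $v_x \in \g_x$ and $v' \in \g_J$; since $v'$ contributes only a constant to $h_v$, the minima of $h_v$ on $M$ are determined by $v_x$. Remark \ref{rema:localminimum} then says $h_v$ attains its minimum at $x$ iff the weights $\alpha_1,\dots,\alpha_n \in \Hom(G_x, S^1)$ of the $G_x$-representation on $T_xM/T_x\F_{\g_J}$ satisfy $\langle d\alpha_i, v_x\rangle \geq 0$ for all $i$. The local toric model of $M = X(\Delta)/H$ near $x$ identifies these weights with the dual basis to the primitive generators $\lambda_i$ of the rays of $\sigma$. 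Hence $h_v$ is minimized at $c_\tau$ iff $v_x \in \sigma$, iff $q(v) \in q(\sigma) = \tau$, which says the normal cone at $c_\tau$ equals $\tau$ and completes the proof.

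The principal obstacle I expect lies in the weight identification in (ii): verifying that the $G_x$-weights on the transverse slice $T_xM/T_x\F_{\g_J}$ inside the quotient $X(\Delta)/H$ really coincide with those dual to the ray generators of $\sigma$. The analogous statement in $X(\Delta)$ is classical toric geometry, but in the quotient one must track how the tangent space at a preimage $\tilde{x} \in X(\Delta)$ descends to $T_xM$ and how the $H$-orbit directions align with the $\F_{\g_J}$-leaves. This is naturally carried out by fitting the $G_x$-equivariant charts provided by Lemmas \ref{lemm:eqholochart} and \ref{lemm:eDarboux} into the standard toric chart on $X(\Delta)$; once this compatibility is established the remainder is formal linear algebra.
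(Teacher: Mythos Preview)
Your proposal is correct and follows essentially the same approach as the paper's proof: both rest on Corollary~\ref{coro:liftedconvexity} for convexity, locate the minimum locus of each $h_v$ via Remark~\ref{rema:localminimum}, and reduce the key step to the weight identification you flag as the obstacle (which the paper records in the paragraph just before the lemma, introducing the submanifolds $Y_\sigma$ and the dual basis $(\alpha_i^I)_{i\in I}$). The only difference is organizational: the paper works with all cones $\sigma\in\Delta$ at once and shows that each $\widetilde{\Phi}(Y_\sigma)$ is a face of $P$ with normal cone $q(\sigma)$, whereas you restrict to maximal cones and vertices---harmless, since the normal fan of a polytope is determined by its top-dimensional cones.
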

	Before the proof of Lemma \ref{lemm:normal}, we shall recall notions of normal fan and normal polytope. Let $P$ be an $n$-dimensional polytope in a vector space $V^*$ of dimension $n$. For a vector $\alpha \in V$, we put
	\begin{equation*}
		F_v := \{ \alpha \in P \mid \langle \alpha, v\rangle \leq \langle \alpha', v\rangle \text{ for all $\alpha' \in P$}\}. 
	\end{equation*}
	$F_v$ is a face of $P$ that attains the minimum value of $v$. For a face $F$ of $P$, the \emph{(inner) normal cone} $\sigma_F$ of $F$ is given by
	\begin{equation*}
		\sigma_F := \{ v \in V \mid F_v \subseteq F\}. 
	\end{equation*}
	Its relative interior is given by $\{ v \in V \mid F_v = F\}$. The \emph{(inner) normal fan} of $P$ is the correction $\Delta_P := \{\sigma_F\}_F$ of cones $\sigma_F$ for the faces $F$ of $P$. Conversely, for given fan $\Delta$ in $V$, a polytope $P$ in $V^*$ whose (inner) normal fan coincides with $\Delta$ is called an \emph{(inner) normal polytope} of $\Delta$. If such a polytope $P$ exists for $\Delta$, then $\Delta$ is said to be \emph{polytopal}. 

	Now assume that $\Delta$ is a nonsingular fan in $\g$. 
	We also prepare several notations of submanifolds. For a cone $\sigma \in \Delta$, we denote by $X_\sigma$ the closed toric subvariety of $X(\Delta)$ corresponds to $\sigma$. If $\lambda_1,\dots, \lambda_k \in \Hom (S^1,G)$ be the primitive generators of $1$-cones of $\Delta$, $\sigma$ can be written as $\sigma = \pos (\lambda_i \mid i \in I)$ for some $I \subseteq \{1,\dots, k\}$ and $(\lambda_i)_{i \in I}$ is a part of $\Z$-basis of $\Hom (S^1,G)$. More precisely, each point of $X_\sigma$ is fixed by a subtorus $G_\sigma$ of $G$, and $(\lambda_i)_{i \in I}$ is a $\Z$-basis of $\Hom (S^1, G_\sigma)$. The image $Y_\sigma$ of $X_\sigma$ by the quotient map $X(\Delta)\to X(\Delta)/H$ is a closed submanifold of $X(\Delta)/H$ because $X_\sigma$ and $Y_\sigma$ both are connected components of the set of fixed points by the $G_\sigma$-actions. If we denote by $(\alpha^I_i)_{i \in I}$ the dual basis of $(\lambda_i)_{i \in I}$, the set of nonzero weights of $T_xY_\sigma$ coincides with $(\alpha^I_i)_{i \in I}$ for all $x \in Y_\sigma$.  
	\begin{proof}[Proof of Lemma \ref{lemm:normal}]
		We may assume that $q^*\circ \widetilde{\Phi} = \Phi$ without loss of generality. For $v \in \g$, define $h_v \co X(\Delta)/H \to \g^*$ by $h_v(x) = \langle \Phi (x), v \rangle$. Then, $h_v (x) = \langle \widetilde{\Phi}(x),q(v)\rangle$. We shall see that each connected component of the set of critical points of $h_v$ is one of $Y_\sigma$ for some $\sigma \in \Delta$. Let $x \in X(\Delta)/H$. Since $dh_v = -\iota_{X_v}\omega$, $x$ is a critical point of $h_v$ if and only if$(X_v)_x \in T_x\F_{\g_J}$, in particular, $v \in \g_x + \g_J$. Therefore, the set of critical points is 
		\begin{equation*}
			\bigcup _{\sigma ; v \in \g_\sigma +\g_J}Y_\sigma. 
		\end{equation*}
		Assume that $h_v$ takes the minimum value $a_v$ on $Y_\sigma$. Let $v_\sigma \in \g_\sigma$ such that $q(v) = q(v_\sigma)$. Then, $\langle d\alpha_i^I, v_\sigma\rangle > 0$ for all $i \in I$, where $(\lambda_i)_{i \in I}$ is the set of primitive generators of $\sigma$ (see Remark \ref{rema:localminimum}). Therefore $v _\sigma$ sits in the relative interior of $\sigma$. In particular, $q(v)$ sits in the relative interior of $q(\sigma)$. The converse is also true; if $q(v)$ sits in the relative interior of $q(\sigma)$, then $h_v$ takes the minimum value $a_v$ on $Y_\sigma$. 
		
		By Corollary \ref{coro:liftedconvexity}, $\widetilde{\Phi} (X(\Delta)/H)$ is a convex polytope $P$ in $(\g/\g_J)^*$. We claim that, for each $\sigma$, the image of $Y_\sigma$ by $\widetilde{\Phi}$ is a face of $P$. Let $v \in \g$ such that $q(v)$ sits in the relative interior of $\sigma$. Then, $h_v$ takes the minimum value $a_v$ on $Y_\sigma$. But $h_v(x) = \langle \widetilde{\Phi}(x), q(v)\rangle$ implies that $x$ attains the minimum value $a_v$ of $h_v$ if and only if $\widetilde{\Phi}(x)$ attains the minimum value $a_v$ of $q(v)|_{P}$. Since $h_v^{-1}(a_v) = Y_\sigma$, we have that $(q(v)|_P)^{-1}(a_v) = \widetilde{\Phi}(Y_\sigma)$. Since $(q(v)|_P)( \alpha) \geq a_v$ for all $\alpha \in P$, $\widetilde{\Phi}(Y_\sigma)$ is a face of $P$ that is given by $P \cap H_{q(v), a_v}$, where $H_{q(v), a_v}$ is the hyperplane in $(\g/\g_J)^*$ defined as $H_{q(v), a_v} := \{ \alpha \in (\g/\g_J)^* \mid \langle \alpha, q(v)\rangle = a_v \}$. 
		
		Conversely, if a face $F$ of $P$ is given by $P \cap H_{q(v), a_v}$, then $F$ is the image of $Y_\sigma$ by $\widetilde{\Phi}$, where $\sigma$ is the cone such that $q(v)$ sits in the relative interior of $q(\sigma)$. It turns out that for each face $F$ of $P$ there exists a cone $\sigma$ such that the inner normal cone of $F$ coincides with $q(\sigma)$. Hence $P$ is a normal polytope of $q(\Delta)$, as required. 
	\end{proof}
	Now we consider the obstruction for the existence of a moment map in case of LVMB manifold with indispensable integer $0$. Let $\Sigma$ be an abstact simplicial complex on $\{0, 1,\dots, m\}$ (a singleton $\{i\}$ does not need to be a member of $\Sigma$). Let $G = (S^1)^m$. Then $\g = \R^m$ and $\Z^m$ is identified with $\Hom (S^1,G)$. $G$ acts on $\C P^m$ via $(g_1,\dots, g_m) \cdot [z_0, z_1,\dots, z_m] := [z_0, g_1z_1,\dots, g_mz_m]$ for $(g_1,\dots, g_m) \in G$ and $[z_0, z_1,\dots, z_m] \in \C P^m$. Put $e_0 := -e_1-\dots -e_m$ and define 
	\begin{equation*}
		\Delta := \{ \pos (e_i \mid i \in I) \mid I \in \Sigma\}. 
	\end{equation*}
	$\Delta$ is a nonsingular fan in $\R^m$ and the toric variety $X(\Delta)$ associated with $\Delta_\Sigma$ is given by 
	\begin{equation*}
		X(\Delta_\Sigma) = \bigcup_{I \in \Sigma}U_I, 
	\end{equation*}
	where 
	\begin{equation*}
		U_I = \{ [z] = [z_0,\dots, z_m] \in \C P^m \mid z_j \neq 0 \text{ if $j \notin I$}\}. 
	\end{equation*}
	Let $\mathfrak{h} \subseteq \C^m$ such that $\Delta :=\Delta_\Sigma$ and $\mathfrak{h}$ satisfy the conditions (1) and (2). We call the manifold $X(\Delta_\Sigma)/H$ an \emph{LVMB manifold}. Moreover, if $q(\Delta_\Sigma)$ is polytopal, we call it an \emph{LVM manifold}, due to \cite[Theorems 2.2 and 3.10]{Battisti}. 
	
	If an integer $i$ satisfies that $\{i\} \notin \Sigma$, we say that $i$ is \emph{indispensable}, according to the literature of LVMB manifolds (see \cite{Bosio}, \cite{Meersseman} and \cite{Meersseman-Verjovsky}). In case when $0$ is indispensable, $[z] \in X(\Delta_\Sigma)$ implies that $z_0 \neq 0$. Therefore we can think of $X(\Delta_\Sigma)$ as an open subset of $\C^m$ via the map 
	\begin{equation*}
		[z_0,\dots, z_m] \mapsto \left(\frac{z_1}{z_0},\dots, \frac{z_m}{z_0}\right). 
	\end{equation*}
	Namely, 
	\begin{equation*}
		X(\Delta_\Sigma) = \bigcup_{I \in \Sigma}U_I',
	\end{equation*}
	where 
	\begin{equation*}
		U_I' = \{ z = (z_1,\dots, z_m) \in \C^m \mid z_j \neq 0 \text{ if } j \notin I\}. 
	\end{equation*}
	In case when $0$ is indispensable, we call $X(\Delta_\Sigma)/H$ an \emph{LVMB manifold with indispensable integer $0$}.  
	
	It has been shown in \cite{Battaglia-Zaffran} that the odd degree basic cohomology groups of $X(\Delta_\Sigma)/H$ with respect to $\F_{\g_J}$ vanish for shellable $\Sigma$. Therefore for shellable $\Sigma$, there exists a moment map for any transverse K\"{a}hler form on $X(\Delta_\Sigma)/H$ with respect to $\F_{\g_J}$. We can avoid the assumption on $\Sigma$ for the vanishing of first basic cohomology groups with a straightforward computation.  
	\begin{lemm}\Label{lemm:basic}
		Let $X(\Delta_\Sigma)/H$ be an LVMB manifold with indispensable integer $0$. Let $J$ be the complex structure on $X(\Delta_\Sigma)/H$. Then, $H_{\F_{\g_J}}^1(X(\Delta_\Sigma)/H) =0$. 
	\end{lemm}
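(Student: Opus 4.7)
The plan is to work in the affine realization $X(\Delta_\Sigma) \subseteq \C^m$ afforded by the indispensability of $0$. Given a class in $H^1_{\F_{\g_J}}(M)$, I average over $G = (S^1)^m$ to take a $G$-invariant representative $\alpha$, and pull back to a $G$-invariant $1$-form $\tilde\alpha$ on $X(\Delta_\Sigma)$. By $G$-invariance,
\[
\tilde\alpha = \sum_{j=1}^m a_j(\rho)\, d\rho_j + \sum_{j=1}^m b_j(\rho)\, \sigma_j, \qquad \rho_j := |z_j|^2,\ \sigma_j := \operatorname{Im}(\bar z_j\, dz_j),
\]
with $a_j, b_j$ smooth functions of $\rho = (\rho_1, \dots, \rho_m)$. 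Computing $\iota_{X_u}\tilde\alpha$ for $u \in \g_J$ and $\iota_{X_w}\tilde\alpha$ for $w \in \mathfrak{h}$ shows, using $p(\mathfrak{h}) = \g_J$ (Proposition \ref{prop:phgJ}), that being basic amounts to $(\rho_j b_j)_j, (\rho_j a_j)_j \in \g_J^\circ$. Expanding $d\tilde\alpha = 0$ on the dense open subset where all $\rho_j > 0$, using the identity $d\sigma_j = \rho_j^{-1}\, d\rho_j \wedge \sigma_j$, forces $\partial_{\rho_k}a_j = \partial_{\rho_j}a_k$ and each $b_j$ to depend only on $\rho_j$ with $\rho_j b_j(\rho_j) \equiv c_j$ a constant. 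Smoothness of $b_j$ across $\{\rho_j = 0\} \cap X(\Delta_\Sigma)$, which is non-empty exactly when $\{j\} \in \Sigma$, then forces $c_j = 0$ for every dispensable $j$.

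The crucial step is to conclude $c_j = 0$ for every index. Let $V_{\mathrm{disp}} := \mathrm{span}\{e_j : j \text{ dispensable}\} \subseteq \g$; by the previous paragraph $c \in V_{\mathrm{disp}}^\circ$. The $1$-cones of $\Delta_\Sigma$ are precisely $\pos(e_j)$ for the dispensable $j$, so completeness of $q(\Delta_\Sigma)$ in $\g/\g_J$ forces the vectors $q(e_j)$ for $j$ dispensable to positively, hence linearly, span $\g/\g_J$. Equivalently $V_{\mathrm{disp}} + \g_J = \g$, so $V_{\mathrm{disp}}^\circ \cap \g_J^\circ = 0$; combined with $c \in \g_J^\circ$, this yields $c = 0$, and $\tilde\alpha = \sum_j a_j(\rho)\, d\rho_j$ is of pure $d\rho$-type.

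To produce the basic primitive, observe that the image $\rho(X(\Delta_\Sigma))$ deformation retracts onto $(1,\dots,1)$ via $(t, \rho) \mapsto (1-t)\rho + t(1,\dots,1)$, which remains in the image because every coordinate is positive for $t > 0$. Integration along this contraction yields a smooth primitive $F$ of $\sum a_j d\rho_j$ on $\rho(X(\Delta_\Sigma))$, and $\tilde f := F \circ \rho$ is a $G$-invariant smooth function on $X(\Delta_\Sigma)$ with $d\tilde f = \tilde\alpha$. The constraint $(\rho_j a_j)_j \in \g_J^\circ$ is exactly the annihilation of $F$ by the infinitesimal $\g_J$-dilations $\sum_j v_j \rho_j \partial_{\rho_j}$ with $v \in \g_J$; since $\exp(u + \sqrt{-1}J_0(u)) \in H$ acts on $\rho$-coordinates by $\rho \mapsto e^{-4\pi J_0(u)}\rho$ (using the paper's convention $\exp(w) = e^{2\pi \sqrt{-1} w}$), the function $\tilde f$ is $H$-invariant and descends to a basic function $f$ on $M$ with $df = \alpha$. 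The main obstacle is the algebraic step isolating $c = 0$: it pairs the smoothness constraint at the dispensable boundary strata with the completeness of the quotient fan $q(\Delta_\Sigma)$ on the indispensable side, and without both inputs one only gets $c \in \g_J^\circ \cap V_{\mathrm{disp}}^\circ$ rather than $c = 0$.
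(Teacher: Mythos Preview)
Your proof is correct and follows the same overall strategy as the paper's: pull back to $X(\Delta_\Sigma)\subseteq\C^m$, average over $G$, extract period-type constants, and kill them by combining the vanishing at dispensable indices with the $\g_J$-annihilation coming from basicness. The packaging differs in two places. First, you obtain the constants $c_j=\rho_jb_j$ from closedness in the $(d\rho_j,\sigma_j)$-frame and kill the dispensable ones by smoothness of $b_j$ across $\{\rho_j=0\}$; the paper instead writes $\gamma=\sum f_i\,dz_i+\bar f_i\,d\bar z_i$, extracts $a_i=2\pi\sqrt{-1}(z_if_i-\bar z_i\bar f_i)$ from $G$-invariance, and kills the dispensable ones by the topological fact that the loops $c_i$ are null-homologous (via simple connectedness of $X(\Delta_{\Sigma'})$). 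Second, for the algebraic step the paper fixes a single maximal simplex $\{1,\dots,n\}\in\Sigma$ so that $q(e_1),\dots,q(e_n)$ is a basis of $\g/\g_J$ and reads off the remaining relations explicitly, whereas you argue globally that completeness of $q(\Delta_\Sigma)$ forces $V_{\mathrm{disp}}+\g_J=\g$ and hence $V_{\mathrm{disp}}^\circ\cap\g_J^\circ=0$; these are equivalent formulations. Your explicit radial construction of the primitive and the check of $H$-invariance are not strictly needed---once $\tilde\alpha$ is exact on $X(\Delta_\Sigma)$, any primitive is automatically $H$-invariant and basic because its differential is $H$-horizontal and $\F_{\g_J}$-horizontal---but they do no harm.
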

	\begin{proof}
		Assume that, $\{i\}$ is a member of $\Sigma$ for $i=1,\dots, r$ but not for $i = r+1,\dots, m$. Then, $X(\Delta_\Sigma) = X(\Delta_{\Sigma'}) \times (\C \setminus \{0\})^{m-r}$, where $\Sigma'$ is an abstract simplicial complex on $\{0,\dots, r\}$ such that if $I \in \Sigma$ then $I \in \Sigma'$. $X(\Delta_{\Sigma'})$ is a complement of coordinate subspaces of real codimension $\geq 4$ in $\C^r$. Thus, $X(\Delta_{\Sigma'})$ is simply connected. Let $c_i \co S^1 \to X(\Delta_\Sigma)$ be the curve defined by 
		\begin{equation*}
			c_i (t) = (\underbrace{1,\dots, 1}_{i-1}, t, \underbrace{1,\dots, 1}_{m-i}) \in X(\Delta_\Sigma) \subseteq \C^m
		\end{equation*}
		for $i=1,\dots, m$. $c_i$ is null-homologous for $i=1,\dots, r$ and the homology classes $[c_i]$ determined by $c_i$ for $i=r+1,\dots, m$ form a basis of $H_1(X(\Delta_\Sigma))$. 
	
		Let $\beta$ be a $1$-form on $X(\Delta_\Sigma)/H$. 
		$\beta$ is closed and basic for $\F_{\g_J}$ if and only if $d\beta =0$ and $\iota_{X_v}\beta =0$ for any $v \in \g_J$. Let $\pi \co X(\Delta_\Sigma) \to X(\Delta_\Sigma)/H$ be the quotient map. $\pi^*\beta$ is a $1$-form basic for $\F_{\mathfrak{h}}$. That is, $\iota_{X_u}\pi^*\beta = 0$ and $\iota_{X_u}d\pi^*\beta =0$ for $u \in \mathfrak{h}$. Therefore we need to show that a closed $1$-form $\gamma$ on $X(\Delta_\Sigma)$ satisfying
		\begin{itemize}
			\item $\iota_{X_u}\gamma = 0$ for $u \in \mathfrak{h}$, 
			\item $\iota_{X_v}\gamma = 0$ for $v \in \g_J$. 
		\end{itemize}
		is exact. Let $\gamma$ be such a $1$-form on $X(\Delta_\Sigma)$. By averaging $\gamma$ with the action of $G$, we may assume that $\gamma$ is $G$-invariant without loss of generality. Since $\gamma$ is real, we can represent 
		\begin{equation*}
			\gamma = \sum_{i=1}^m f_idz_i +\overline{f_i}d\overline{z_i}
		\end{equation*}
		with smooth functions $f_i \co X(\Delta_\Sigma) \to \C$. 
		Let $v = (v_1,\dots, v_m) \in  \g =\R^m$. Then $X_v$ can be represented as 
		\begin{equation}\Label{eq:Xv}
			X_v = 2\pi\sum_{i=1}^m \sqrt{-1}v_i\left(z_i\frac{\partial}{\partial z_i} - \overline{z_i}\frac{\partial}{\partial \overline{z_i}}\right). 
		\end{equation}
		Since $\gamma$ is $G$-invariant, we have that there exists $a_i \in \R$ such that $2\pi\sqrt{-1}(z_if_i -\overline{z_i}\overline{f_i}) =a_i$ for $i=1,\dots, m$. Since $H_1(X(\Delta_\Sigma))$ is generated by $c_i$ for $i=r+1,\dots, m$ and the Kronecker pairing is given by $\langle [c_i], [\gamma] \rangle =a_i$, it suffices to show that $a_i = 0$ for $i = r+1,\dots, m$.

		If $u = (u_{1,1} + \sqrt{-1}u_{1,\sqrt{-1}}, \dots, u_{m,1}+\sqrt{-1}u_{m,\sqrt{-1}}) \in \C^m$, $X_u$ can be represented as 
		\begin{equation}\Label{eq:Xu}
			X_u = 2\pi\sum_{i=1}^m\left( \sqrt{-1}u_{i,1}\left(z_i\frac{\partial}{\partial z_i} - \overline{z_i}\frac{\partial}{\partial \overline{z_i}}\right) - u_{i,\sqrt{-1}}\left(z_i\frac{\partial}{\partial z_i} + \overline{z_i}\frac{\partial}{\partial \overline{z_i}}\right)\right).
		\end{equation}
		Since $p(\mathfrak{h}) =\g_J$ by Proposition \ref{prop:phgJ}, it follows from \eqref{eq:Xv} and \eqref{eq:Xu} that the conditions $\iota_{X_v}\gamma =0$ for $v \in \g_J$ and $\iota_{X_u}\gamma = 0$ for $u \in \mathfrak{h}$ are equivalent to 
		\begin{equation}\Label{eq:vzf}
			\sum_{i=1}^m v_iz_if_i =0 \quad \text{for all $v = (v_1,\dots, v_m) \in \g_J$}. 
		\end{equation}
		Assume that $\{1,\dots, n\} \in \Sigma$ is a maximal simplex. Then, $q(e_1), \dots, q(e_n)$ form a basis of $\g/\g_J$. Let $\alpha_1,\dots, \alpha_n$ be the dual basis of $q(e_1),\dots, q(e_n)$. Then, we have a basis 
		\begin{equation*}
			e_j - \sum _{i=1}^n \langle \alpha_i, q(e_j)\rangle e_i \quad \text{for $j=n+1,\dots, m$}
		\end{equation*}
		of $\g_J = \ker q$. Therefore we have that \eqref{eq:vzf} is equivalent  to 
		\begin{equation}\Label{eq:zf}
			z_jf_j -\sum_{i=1}^n \langle \alpha_i, q(e_j)\rangle z_if_i = 0 \quad \text{for $j=n+1,\dots, m$}. 
		\end{equation}
		The Kronecker pairing $\langle [c_i], [\gamma]\rangle = 0$ for $i =1,\dots, r$ because $c_i$ for $i=1,\dots,r$ is null-homologous. Therefore $a_i = 2\pi\sqrt{-1}(z_if_i-\overline{z_i}\overline{f_i}) = 0$ for $i =1,\dots, n$. This together with \eqref{eq:zf} yields that $a_j = 0$ for all $j=n+1,\dots, m$. Therefore $\gamma$ is exact, proving the lemma. 
	\end{proof}
	\begin{coro}\Label{coro:necessary}
		Assume that $\F_{\g_J}$ on $X(\Delta_\Sigma)/H$ is transverse K\"{a}hler. Then, the complete fan $q(\Delta)$ in $\g/\g_J$ is polytopal. Namely, $X(\Delta_\Sigma)/H$ is an LVM manifold. 
	\end{coro}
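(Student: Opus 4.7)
The plan is to combine essentially all the machinery developed in the paper: use Proposition \ref{prop:average} to average the transverse K\"ahler form, use Lemma \ref{lemm:basic} to produce a moment map, then feed the result into Lemma \ref{lemm:normal} to get a normal polytope for $q(\Delta)$.

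More concretely, the first step is: by hypothesis $\F_{\g_J}$ carries a transverse K\"ahler form, and by Proposition \ref{prop:average} we may take it to be $G$-invariant; call it $\omega$. I would then argue that for each $v \in \g$ the $1$-form $\iota_{X_v}\omega$ is closed (by Cartan's formula, since $\omega$ is closed and $G$-invariant) and \emph{basic} with respect to $\F_{\g_J}$. Basicness amounts to checking $\iota_{X_u}(\iota_{X_v}\omega) = 0$ and $\mathcal{L}_{X_u}(\iota_{X_v}\omega) = 0$ for $u \in \g_J$; the first holds because $\iota_{X_u}\omega = 0$ (the kernel of $\omega$ is $T\F_{\g_J}$), and the second follows because $G$ is abelian so $[X_u,X_v]=0$.

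The next step is to invoke Lemma \ref{lemm:basic}: since $H^1_{\F_{\g_J}}(X(\Delta_\Sigma)/H) = 0$, each closed basic $1$-form $\iota_{X_v}\omega$ is exact, so one can choose smooth functions $h_v$ on $M$ with $dh_v = -\iota_{X_v}\omega$ depending linearly on $v$, and these assemble into a moment map $\Phi \co X(\Delta_\Sigma)/H \to \g^*$. (Indispensability of $0$ is what makes Lemma \ref{lemm:basic} applicable here.) With a moment map in hand, Lemma \ref{lemm:normal} applies directly: the image of the induced lifted moment map $\widetilde{\Phi}$ is a convex polytope $P$ in $(\g/\g_J)^*$ whose inner normal fan is $q(\Delta)$.

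Finally, the existence of $P$ means by definition that $q(\Delta)$ is polytopal; by the characterization of LVM manifolds within LVMB manifolds (\cite[Theorems 2.2 and 3.10]{Battisti}) recalled just before the statement, this says exactly that $X(\Delta_\Sigma)/H$ is an LVM manifold. I do not expect any real obstacle; the only genuine check is the basicness of $\iota_{X_v}\omega$ against $\F_{\g_J}$, which is the step that lets the vanishing result of Lemma \ref{lemm:basic} do its work.
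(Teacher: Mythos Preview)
Your proposal is correct and follows essentially the same route as the paper's proof: average to a $G$-invariant form via Proposition~\ref{prop:average}, use Lemma~\ref{lemm:basic} to get exactness of $-\iota_{X_v}\omega$ and hence a moment map, then apply Lemma~\ref{lemm:normal}. Your explicit verification that $\iota_{X_v}\omega$ is basic for $\F_{\g_J}$ is a helpful detail the paper leaves implicit, since Lemma~\ref{lemm:basic} concerns basic cohomology.
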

	\begin{proof}
		Let $\omega$ be a transverse K\"{a}hler form on $X(\Delta_\Sigma)/H$ with respect to $\F_{\g_J}$. Since $\F_{\g_J}$ is $G$-invariant, we may assume that $\omega$ is $G$-invariant by Proposition \ref{prop:average}. The closed $1$-form $-\iota_{X_v}\omega$ is exact for all $v \in \g$ by Lemma \ref{lemm:basic}. Therefore there exists a moment map on $X(\Delta_\Sigma)/H$ with respect to $\omega$. Let $\widetilde{\Phi} \co X(\Delta_\Sigma)/H \to (\g/\g_J)^*$ be a lifted moment map. By Lemma \ref{lemm:normal}, the image of $X(\Delta_\Sigma)/H$ by $\widetilde{\Phi}$ is a normal polytope of $q(\Delta)$. Therefore $q(\Delta)$ is polytopal, as required. 
	\end{proof}
	Conversely, we can construct a transverse K\"{a}hler form on $X(\Delta_\Sigma)/H$ with respect to $\F_{\g_J}$ from a normal polytope $P$ of $q(\Delta)$. Essentially, this fact has been shown in \cite{Loeb-Nicolau} and \cite{Meersseman}. But, the ``language" in this paper is slightly different from them. For reader's convenience, we give a brief explanation of the construction of a transverse K\"{a}hler form without a proof. Let $P$ be a normal polytope of $q(\Delta_\Sigma)$ represented as 
	\begin{equation*}
		P = \{ \alpha \in (\g/\g_J)^* \mid \langle \alpha, q(e_i)\rangle \geq a_i\}. 
	\end{equation*}
	The map $q^* \co (\g/\g_J)^* \to \g^*$ is an injective map. We consider the embedding $P \to \g^*$ given by
	\begin{equation*}
		\alpha \mapsto \sum_{i=1}^m(\langle \alpha, q(e_i)\rangle -a_i)e_i^* = q^*(\alpha) -\sum_{i=1}^ma_ie_i^*,  
	\end{equation*}
	where $e_i^*$ denotes the $i$-th dual basis vector of the standard basis $e_1,\dots, e_m$ of $\g = \R^m$. Let $i \co \g_J \to \g$ be the inclusion and consider the dual map $i^* \co \g^* \to \g_J^*$. The image of embedded $P$ is the point $i^*(\sum_{i=1}^m-a_ie_i^*) =: \beta$.
	$X(\Delta_\Sigma)$ is an open subset of $\C^m$. So $X(\Delta_\Sigma)$ has the standard K\"{a}hler form 
	\begin{equation*}
		\omega_{\text{st}} = \frac{\sqrt{-1}}{2}\sum_{i=1}^m dz_i \wedge d\overline{z_i}.
	\end{equation*}
	$G$ acts on $X(\Delta_\Sigma)$ preserving $\omega_{\text{st}}$. The map $\Phi \co X(\Delta_\Sigma) \to \g^*$ given by 
	\begin{equation*}
		\Phi (z_1,\dots, z_m) = \pi \sum_{i=1}^m|z_i|^2e_i^*
	\end{equation*}
	is a moment map with respect to $\omega_{\text{st}}$. For the compostion $i^* \circ \Phi \co X(\Delta_\Sigma) \to \g_J^*$, the value $\beta \in \g_J^*$ is a regular value and $(i^*\circ \Phi)^{-1}(\beta) =: \mathcal{Z}_P$ is a smooth manifold equipped with an action of $G$ and the $G$-invariant transverse symplectic form $\omega := \omega_{\text{st}}|_{\mathcal{Z}_P}$ with respect to $\F_{\g_J}$. Each orbit of $H$ intersects with $\mathcal{Z}_P$ at exactly one point in $\mathcal{Z}_P$, and hence the inclusion $\mathcal{Z}_P \to X(\Delta_\Sigma)$ induces an equivariant diffeomorphism $\varphi \co \mathcal{Z}_P \to X(\Delta_\Sigma)/H$. The form $(\varphi^{-1})^*\omega$ on $X(\Delta_\Sigma)/H$ is what we wanted. The image of a lifted moment map is nothing but $P$ up to translations. 
	
	The construction above and Corollary \ref{coro:necessary} yields the following.
	\begin{theo}
		The holomorphic foliation $\F_{\g_J}$ on an LVMB manifold $X(\Delta_\Sigma)/H$ with indispensable integer $0$ is transverse K\"{a}hler with respect to $\F_{\g_J}$ if and only if $X(\Delta_\Sigma)/H$ is an LVM manifold with indispensable integer $0$. 
	\end{theo}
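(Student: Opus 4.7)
The plan is to establish the biconditional by treating the two directions separately, both of which are essentially consequences of results already developed in this section. The forward direction is immediate from Corollary \ref{coro:necessary}: if $\F_{\g_J}$ is transverse K\"{a}hler, then the complete fan $q(\Delta_\Sigma)$ in $\g/\g_J$ is polytopal, which is precisely the definition of $X(\Delta_\Sigma)/H$ being an LVM manifold. Since the simplicial complex $\Sigma$ is unchanged, the indispensability of $0$ is preserved.

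For the converse, I would assume $X(\Delta_\Sigma)/H$ is an LVM manifold with indispensable integer $0$, so that $q(\Delta_\Sigma)$ admits a normal polytope
\begin{equation*}
P = \{ \alpha \in (\g/\g_J)^* \mid \langle \alpha, q(e_i)\rangle \geq a_i, \ i=1,\dots, m\}.
\end{equation*}
The strategy is to apply symplectic reduction exactly as described just before the theorem statement. Because $0$ is indispensable, $X(\Delta_\Sigma)$ sits as an open subset of $\C^m$ and inherits the standard K\"{a}hler form $\omega_{\text{st}} = \tfrac{\sqrt{-1}}{2}\sum_i dz_i \wedge d\overline{z_i}$ together with the standard $G$-moment map $\Phi(z) = \pi\sum_i |z_i|^2 e_i^*$. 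Setting $\beta := i^*(\sum_{i=1}^m -a_i e_i^*) \in \g_J^*$, I would verify that $\beta$ is a regular value of $i^*\circ\Phi$, so that $\mathcal{Z}_P := (i^*\circ\Phi)^{-1}(\beta)$ is a smooth $G$-invariant submanifold on which $\g_J$ acts locally freely and $\omega_{\text{st}}|_{\mathcal{Z}_P}$ is a $G$-invariant transverse K\"{a}hler form with respect to $\F_{\g_J}$.

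The remaining and most delicate step is to show that the composition $\mathcal{Z}_P \hookrightarrow X(\Delta_\Sigma) \to X(\Delta_\Sigma)/H$ is a $G$-equivariant diffeomorphism, equivalently that each $H$-orbit in $X(\Delta_\Sigma)$ meets $\mathcal{Z}_P$ in exactly one point. This is precisely the content of the slice construction appearing in \cite{Loeb-Nicolau} and \cite{Meersseman}, and would need to be transcribed into the present framework; the key algebraic ingredient is the identification $p(\mathfrak{h}) = \g_J$ from Proposition \ref{prop:phgJ}, which ensures that the transverse directions to the $H$-orbits are precisely the directions along which $\beta$ is a regular value. Pulling back $\omega_{\text{st}}|_{\mathcal{Z}_P}$ by the inverse of the resulting equivariant diffeomorphism $\varphi \co \mathcal{Z}_P \to X(\Delta_\Sigma)/H$ yields a transverse K\"{a}hler form on $X(\Delta_\Sigma)/H$ with respect to $\F_{\g_J}$.

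The main obstacle is the verification that each $H$-orbit meets the level set $\mathcal{Z}_P$ transversely in exactly one point; this is where the normality of $P$ (ensuring that the inequalities are compatible with the combinatorics of $\Delta_\Sigma$) and the injectivity of $p|_\mathfrak{h}$ are both essential. All other ingredients, namely the transverse K\"{a}hler property of $\omega_{\text{st}}|_{\mathcal{Z}_P}$ and the existence of a moment map lifting through $q$, follow directly from the constructions already established.
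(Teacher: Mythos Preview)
Your proposal is correct and matches the paper's approach exactly: the paper proves this theorem simply by invoking Corollary \ref{coro:necessary} for the forward direction and the symplectic-reduction construction (from a normal polytope $P$ of $q(\Delta_\Sigma)$) described immediately before the theorem for the converse, with the fact that each $H$-orbit meets $\mathcal{Z}_P$ in exactly one point deferred to \cite{Loeb-Nicolau} and \cite{Meersseman} just as you do. The only small imprecision is that $\omega_{\text{st}}|_{\mathcal{Z}_P}$ is merely transverse \emph{symplectic} on $\mathcal{Z}_P$ (which is not a complex submanifold of $\C^m$); the transverse K\"{a}hler structure lives on $X(\Delta_\Sigma)/H$ after transport via $\varphi$, which you do state correctly in your final step.
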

	We give remarks on the foliation $\F_{\g_J}$ and equivariant holomorphic principal bundles. Let $M_1$ and $M_2$ are complex manifolds with the complex structures $J_1$ and $J_2$, respectively. Assume that compact tori $G_1$ and $G_2$ act on $M_1$ and $M_2$ respectively. If we have an equivariant principal holomorphic bundle $\pi \co M_1 \to M_2$, it is easy to see that $T_x\F_{{\g_1}_{J_1}} = (d\pi)_x^{-1}(T_{\pi (x)}\F_{{\g_2}_{J_2}})$ for all $x \in M_1$. Therefore we can obtain every basic form for $\F_{{\g_1}_{J_1}}$ from a basic form for $\F_{{\g_2}_{J_2}}$ by the pull-back operator $\pi^*$. Moreover, every basic form for $\F_{{\g_1}_{J_1}}$ is also basic for the action of $\ker \alpha$ (that is, invariant under the action of $\ker \alpha$ and the interior product with fundamental vector fields generated by the action of $\ker \alpha$ vanishes). Therefore there exists the inverse operator $(\pi^*)^{-1}$ of $\pi^*$ defined for basic forms for $\F_{{\g_1}_{J_1}}$. 
	In particular, $\F_{{\g_1}_{J_1}}$ on $M_1$ is transverse K\"{a}hler if and only if so is $\F_{{\g_2}_{J_2}}$ on $M_2$. Also, there exists a moment map $\Phi_1 \co M_1 \to \g^*_1$ with respect to a $G_1$ invariant transverse K\"{a}hler form $\omega_1$ if and only if there exists a moment map $\Phi_2 \co M_2 \to \g^*_2$ with respect to $(\pi^*)^{-1}\omega_1$.
	
	It has been shown in \cite{Ishida} that a compact connected complex manifold $M$ equipped with a maximal action of a compact torus $G$ is obtained as a quotient of an LVMB manifold with indispensable integer $0$. Therefore, we can characterize the manifold with a maximal torus actions which admits a transverse K\"{a}hler form with respect to $\F_{\g_J}$. 
	\begin{theo}\Label{theo:polytopal}
		Let $M$, $G$, $J$, $\Delta$, $\mathfrak{h}$ be as Theorem \ref{theo:maximal}. Then, the followings are equivalent:
		\begin{enumerate}
			\item $\F_{\g_J}$ on $M$ is transverse K\"{a}hler.
			\item $q(\Delta)$ is polytopal.
		\end{enumerate}
		In this case, for any $G$-invariant transverse K\"{a}hler form $\omega$, there exists a moment map $\Phi \co M \to \g^*$ with respect to $\omega$ and the image of $M$ by a lifted moment map $\widetilde{\Phi} \co M \to (\g/\g_J)^*$ is an inner normal polytope of $q(\Delta)$. 
	\end{theo}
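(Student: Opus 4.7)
The plan is to reduce to the LVMB case with indispensable integer $0$ via the equivariant holomorphic principal bundle provided by the result of \cite{Ishida} quoted just before the theorem. That result furnishes $\pi \co M_1 \to M$ with $M_1 = X(\Delta_\Sigma)/H_1$ an LVMB manifold with indispensable integer $0$, carrying a compatible maximal action of a torus $G_1$, and $M = M_1/K$ for some closed subtorus $K \subseteq G_1$. From the structure theorem the quotient data of $M_1$ induces the pair $(\Delta, \mathfrak{h})$ for $M$, and in particular $q_1(\Delta_\Sigma)$ as a complete fan in $\g_1/\g_{1,J_1}$ is canonically identified with $q(\Delta)$ as a complete fan in $\g/\g_J$ (both are the "base" fan of the toric orbit space). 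The remark preceding the theorem gives $T_x \F_{\g_{1,J_1}} = (d\pi)_x^{-1}(T_{\pi(x)} \F_{\g_J})$ and the bijection $\omega \leftrightarrow \pi^*\omega$ between $G$-invariant transverse K\"{a}hler forms on $M$ with respect to $\F_{\g_J}$ and $G_1$-invariant ones on $M_1$ with respect to $\F_{\g_{1,J_1}}$.

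For the implication $(1) \Rightarrow (2)$, start with a $G$-invariant transverse K\"{a}hler form on $M$, lift it to a $G_1$-invariant transverse K\"{a}hler form on $M_1$, and apply Corollary \ref{coro:necessary} to conclude that $q_1(\Delta_\Sigma) = q(\Delta)$ is polytopal. For the converse $(2) \Rightarrow (1)$, use the explicit construction sketched immediately before the theorem: from a normal polytope $P$ of $q(\Delta) = q_1(\Delta_\Sigma)$, produce $\mathcal{Z}_P = (i^* \circ \Phi)^{-1}(\beta) \subseteq X(\Delta_\Sigma)$ and transport the restriction of the standard K\"{a}hler form to a $G_1$-invariant transverse K\"{a}hler form $\omega_1$ on $M_1$; then $(\pi^*)^{-1}\omega_1$ is a $G$-invariant transverse K\"{a}hler form on $M$ witnessing that $\F_{\g_J}$ is transverse K\"{a}hler.

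For the moment map assertion, fix any $G$-invariant transverse K\"{a}hler form $\omega$ on $M$ with respect to $\F_{\g_J}$ and pull it back to $M_1$. Lemma \ref{lemm:basic} gives $H^1_{\F_{\g_{1,J_1}}}(M_1) = 0$, so $-\iota_{X_{v_1}}(\pi^*\omega)$ is exact for every $v_1 \in \g_1$, yielding a moment map $\Phi_1 \co M_1 \to \g_1^*$. By the correspondence in the remark on equivariant principal bundles this descends to a moment map $\Phi \co M \to \g^*$ for $\omega$. Applying Lemma \ref{lemm:normal} to $M_1$ shows that the lifted moment map $\widetilde{\Phi_1} \co M_1 \to (\g_1/\g_{1,J_1})^*$ has image an inner normal polytope of $q_1(\Delta_\Sigma)$; under the canonical identification of $(\g_1/\g_{1,J_1})^*$ with $(\g/\g_J)^*$ and of $q_1(\Delta_\Sigma)$ with $q(\Delta)$, this image agrees with $\widetilde{\Phi}(M)$, since $\widetilde{\Phi} \circ \pi = \widetilde{\Phi_1}$ up to the translation absorbed into the choice of $c$ in Theorem \ref{theo:liftedmoment}.

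The main obstacle is bookkeeping around the identification of the two pairs of data $(\Delta, \g_J)$ and $(\Delta_\Sigma, \g_{1,J_1})$ under the principal bundle, so that the polytopality of $q(\Delta)$ is genuinely the same statement as polytopality of $q_1(\Delta_\Sigma)$ and so that the lifted moment maps on $M$ and $M_1$ have the same image; once this identification is made explicit, every remaining ingredient is provided by Lemma \ref{lemm:basic}, Lemma \ref{lemm:normal}, Corollary \ref{coro:necessary}, and the construction recalled just before the statement.
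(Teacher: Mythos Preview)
Your proposal is correct and follows essentially the same route as the paper: the paper does not write out a separate proof of Theorem~\ref{theo:polytopal} but states it immediately after the remarks on equivariant holomorphic principal bundles and the fact from \cite{Ishida} that every such $M$ is a quotient of an LVMB manifold with indispensable integer $0$, leaving the reader to combine Lemma~\ref{lemm:basic}, Lemma~\ref{lemm:normal}, Corollary~\ref{coro:necessary}, and the explicit construction of a transverse K\"{a}hler form from a normal polytope. Your write-up makes this implicit argument explicit and correctly flags the only genuinely delicate point, namely the identification of $q(\Delta)$ in $\g/\g_J$ with $q_1(\Delta_\Sigma)$ in $\g_1/\g_{1,J_1}$ under the principal bundle.
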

	As a corollary, we show that the conjecture posed in \cite{Foutou-Zaffran} holds.
	\begin{coro}
		For an LVMB manifold $M$, the holomorphic foliation $\F_{\g_J}$ is transverse K\"{a}hler if and only if $M$ is an LVM manifold. 
	\end{coro}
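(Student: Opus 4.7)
The plan is to deduce this corollary essentially as a one-line consequence of Theorem~\ref{theo:polytopal}, combined with the characterization of LVM manifolds among LVMB manifolds through the polytopality of $q(\Delta_\Sigma)$. First I would check that every LVMB manifold $M=X(\Delta_\Sigma)/H$ satisfies the hypotheses of Theorem~\ref{theo:polytopal}: the effective action of the compact torus $(S^1)^m/(G\cap H)$ on $M$ preserves the complex structure $J$, and it is maximal because a torus-fixed point of $X(\Delta_\Sigma)$ coming from a maximal simplex of $\Sigma$ descends to a point of $M$ whose isotropy subgroup has exactly the dimension needed for $\dim G + \dim G_x = \dim_\R M$. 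Here the relevant dimensions are read off from the defining conditions (1) and (2) on the pair $(\Delta_\Sigma,\mathfrak{h})$.

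Once this is in place, Theorem~\ref{theo:polytopal} yields immediately that $\F_{\g_J}$ on $M$ is transverse K\"ahler if and only if the complete fan $q(\Delta)$ intrinsically associated to $M$ is polytopal. The second step is to identify this fan with the combinatorial fan $q(\Delta_\Sigma)$ coming from the LVMB data: the characteristic submanifolds of $M$ are precisely the descents to $X(\Delta_\Sigma)/H$ of the coordinate hyperplanes $\{z_i=0\}\cap X(\Delta_\Sigma)$, so $\Delta$ and $\Delta_\Sigma$ agree in $\g=\R^m$, and by Proposition~\ref{prop:phgJ} the kernel $p(\mathfrak{h})$ we quotient by equals $\g_J$ in both constructions. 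Combining with the characterization recalled in this paper via \cite[Theorems~2.2 and~3.10]{Battisti}, namely that $X(\Delta_\Sigma)/H$ is an LVM manifold exactly when $q(\Delta_\Sigma)$ is polytopal, closes the equivalence.

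The proof therefore contains no new analytic content beyond what was already established in the paper; the only obstacle, if one wants to call it that, is purely bookkeeping — matching the fan and the distinguished subspace produced abstractly by Theorem~\ref{theo:maximal} with those read off from the LVMB data $(\Sigma,\mathfrak{h})$. This is precisely the identification already invoked implicitly in the theorem on LVMB manifolds with indispensable integer $0$ preceding the corollary, so the deduction reduces to assembling Theorem~\ref{theo:polytopal}, Proposition~\ref{prop:phgJ}, and the polytopality/LVM dictionary.
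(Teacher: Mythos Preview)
Your proposal is correct and follows the same approach as the paper: the corollary is an immediate application of Theorem~\ref{theo:polytopal} once one knows that an LVMB manifold carries a maximal torus action and that the intrinsic fan $q(\Delta)$ agrees with $q(\Delta_\Sigma)$, together with the Battisti characterization of LVM manifolds by polytopality. The paper states the corollary without further proof precisely because this deduction is the intended one; your bookkeeping (maximality of the action, identification of the two fans via Proposition~\ref{prop:phgJ}) simply makes explicit what the paper leaves implicit.
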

	
\begin{bibdiv}

\begin{biblist}

\bib{Atiyah}{article}{
   author={Atiyah, M. F.},
   title={Convexity and commuting Hamiltonians},
   journal={Bull. London Math. Soc.},
   volume={14},
   date={1982},
   number={1},
   pages={1--15},
   issn={0024-6093},
   review={\MR{642416 (83e:53037)}},
   doi={10.1112/blms/14.1.1},
}


\bib{Battaglia-Zaffran}{article}{
  author={Battaglia, Fiammetta},
  author={Zaffran, Dan},
  title={Foliations Modeling Nonrational Simplicial Toric Varieties},
  journal={Int. Math. Res. Notices}, 
  status={first published online: February 24, 2015},
  doi={10.1093/imrn/rnv035}, 
  eprint = {http://imrn.oxfordjournals.org/content/early/2015/02/24/imrn.rnv035.full.pdf+html}, 
}

\bib{Battisti}{article}{
   author={Battisti, L.},
   title={LVMB manifolds and quotients of toric varieties},
   journal={Math. Z.},
   volume={275},
   date={2013},
   number={1-2},
   pages={549--568},
   issn={0025-5874},
   review={\MR{3101820}},
   doi={10.1007/s00209-013-1147-8},
}


\bib{Bosio}{article}{
   author={Bosio, Fr{\'e}d{\'e}ric},
   title={Vari\'et\'es complexes compactes: une g\'en\'eralisation de la
   construction de Meersseman et L\'opez de Medrano-Verjovsky},
   language={French, with English and French summaries},
   journal={Ann. Inst. Fourier (Grenoble)},
   volume={51},
   date={2001},
   number={5},
   pages={1259--1297},
   issn={0373-0956},
   review={\MR{1860666 (2002i:32015)}},
}

\bib{Bosio-Meersseman}{article}{
   author={Bosio, Fr{\'e}d{\'e}ric},
   author={Meersseman, Laurent},
   title={Real quadrics in $\mathbf C^n$, complex manifolds and convex
   polytopes},
   journal={Acta Math.},
   volume={197},
   date={2006},
   number={1},
   pages={53--127},
   issn={0001-5962},
   review={\MR{2285318 (2007j:32037)}},
   doi={10.1007/s11511-006-0008-2},
}



\bib{Foutou-Zaffran}{article}{
   author={Cupit-Foutou, St{\'e}phanie},
   author={Zaffran, Dan},
   title={Non-K\"ahler manifolds and GIT-quotients},
   journal={Math. Z.},
   volume={257},
   date={2007},
   number={4},
   pages={783--797},
   issn={0025-5874},
   review={\MR{2342553 (2008g:32031)}},
   doi={10.1007/s00209-007-0144-1},
}


%
		

\bib{Guillemin-Sternberg}{article}{
   author={Guillemin, V.},
   author={Sternberg, S.},
   title={Convexity properties of the moment mapping},
   journal={Invent. Math.},
   volume={67},
   date={1982},
   number={3},
   pages={491--513},
   issn={0020-9910},
   review={\MR{664117 (83m:58037)}},
   doi={10.1007/BF01398933},
}

%
\bib{Ishida}{article}{
   author={Ishida, Hiroaki},
   title={Complex manifolds with maximal torus actions},
   eprint={http://arxiv.org/abs/1302.0633v3},
}




\bib{Loeb-Nicolau}{article}{
   author={Loeb, J. J.},
   author={Nicolau, M.},
   title={On the complex geometry of a class of non-K\"ahlerian manifolds},
   journal={Israel J. Math.},
   volume={110},
   date={1999},
   pages={371--379},
   issn={0021-2172},
   review={\MR{1750427 (2001b:32034)}},
   doi={10.1007/BF02808191},
}

\bib{Lopez-Verjovsky}{article}{
   author={L{\'o}pez de Medrano, Santiago},
   author={Verjovsky, Alberto},
   title={A new family of complex, compact, non-symplectic manifolds},
   journal={Bol. Soc. Brasil. Mat. (N.S.)},
   volume={28},
   date={1997},
   number={2},
   pages={253--269},
   issn={0100-3569},
   review={\MR{1479504 (98g:32047)}},
   doi={10.1007/BF01233394},
}


\bib{Meersseman}{article}{
   author={Meersseman, Laurent},
   title={A new geometric construction of compact complex manifolds in any
   dimension},
   journal={Math. Ann.},
   volume={317},
   date={2000},
   number={1},
   pages={79--115},
   issn={0025-5831},
   review={\MR{1760670 (2001i:32029)}},
   doi={10.1007/s002080050360},
}

\bib{Meersseman-Verjovsky}{article}{
   author={Meersseman, Laurent},
   author={Verjovsky, Alberto},
   title={Holomorphic principal bundles over projective toric varieties},
   journal={J. Reine Angew. Math.},
   volume={572},
   date={2004},
   pages={57--96},
   issn={0075-4102},
   review={\MR{2076120 (2005e:14080)}},
   doi={10.1515/crll.2004.054},
}



\bib{Panov-Ustinovsky-Verbitsky}{article}{
   author={Panov, Taras},
   author={Ustinovsky, Yury},
   author={Verbitsky, Misha}, 
   title={Complex geometry of moment-angle manifolds}, 
   eprint={http://arxiv.org/abs/1308.2818}
}



\end{biblist}

\end{bibdiv}

\end{document}